\newcommand{\N}{{\mathbb N}}
\newcommand{\R}{{\mathbb R}}
\newcommand{\Z}{{\mathbb Z}}
\newtheorem{theorem}{Theorem}[section]
\newtheorem{prop}{Proposition}[section]
\newtheorem{lemma}{Lemma}[section]
\newtheorem{definition}{Definition}[section]
\newenvironment{proof}{\noindent {\bf Proof.}}{ \hfill $\Box$\\ }
\newcommand\eps{\epsilon}
\def\eps{\varepsilon}
\def\orb{\text{orb}}
\def\subjclass#1{\par\medskip
\noindent\textbf{Mathematics Subject Classification (2010):} #1}
\def\keywords#1{\par\medskip
\noindent\textbf{Keywords.} #1}
\title{On co-$\sigma$-porosity of the parameters with
dense critical orbits for skew tent maps and matching on generalized $\beta$-transformations.}
\author{Henk Bruin
\thanks{Faculty of Mathematics, University of Vienna, 
Oskar Morgensternplatz 1, 1090 Vienna, Austria; {\it henk.bruin@univie.ac.at}}
 \and Gabriella Keszthelyi
\thanks{ H-1521 Budapest, Budafoki \'ut 8, Hungary; {\it keszthelyig@gmail.com}}
}
\date{\today}
\begin{document}

\maketitle

\abstract{We prove that the critical point and the point $1$ have dense orbits for Lebesgue-a.e., parameter pairs in the two-parameter skew tent family and generalised $\beta$-transformations. As an application, we show that for the generalised $\beta$-transformation with a multinacci number as slope, there is matching (i.e., $T^n(0)=T^n(1)$ for some $n \geq 1$) for Lebesgue-a.e.\ translation parameter. 
}

\subjclass{Primary: 37E10, Secondary: 11R06, 37E05, 37E45, 37A45.}
\keywords{interval maps, skew tent map, porosity, dense orbit, $\beta$-transformation, matching}

\section{Introduction}
 
 Tent maps and $\beta$-transformations are among the simplest interval maps that exhibit topologically chaotic behaviour,
 whilst having an absolutely continuous invariant measure $\mu$ (acip), provided their slopes are greater
 than one in absolute value.
 The orbit of the critical point (for the tent map) and the orbit of $1$ (for $\beta$-transformations)
 are the most important because they delimit every other orbit.
 Several paper has been devoted to whether this orbit is dense, or even typical w.r.t.\ $\mu$ (i.e., the Birkhoff Ergodic Theorem applies to this orbit), for a prevalent set of parameters.
 Schmeling \cite{S} showed that for the standard $\beta$-transformation $x \mapsto \beta x \pmod 1$,
 the orbit of $1$ is typical w.r.t.\ the acip $\mu_\beta$
 Lebesgue almost every slope $\beta > 1$.
 His proof relies on dimension-theoretic arguments.
 Bruin \cite{B}, using inducing techniques, proved analogous results for the symmetric tent family
 $T_s(x) = \min\{sx, s(1-x)\}$, $s \in (1,2]$,
 after previous results on denseness of the critical orbit by Brucks \& Misiurewicz
 \cite{BM} and Brucks \& Buckzolich \cite{BB}, who improved
 ``almost every parameter'' to ``a co-$\sigma$-porous parameter set''.
 
\begin{definition}
 A set $A \subset \R^n$ has porosity constant $\eta$ if for every $a \in A$ and $r > 0$ there is $r' \in (0,r)$ and a
 ball $B(x;2\eta r')$ which is contained in $B(a;r') \setminus A$.
 We call $A$ porous if it has some porosity constant, and $\sigma$-porous
 if it is the countable union of porous sets $A_n$ (and hence the porosity constants $\eta_n > 0$ are allowed to tend to $0$ as $n \to \infty$), see \cite{MMPZ, Z}.
 The complement of a $\sigma$-porous set is called {\em co-$\sigma$-porous}.
\end{definition}

Porous sets are nowhere dense and have no Lebesgue density points, so
$\sigma$-porous sets in $\R^n$ are meager and have zero $n$-dimensional Lebesgue measure.
However, they can have full Hausdorff dimension.

In this paper we study the problem for two-parameter families, namely the skew tent family $T_{\alpha,\beta}$ (named by Misiurewicz \& Visinescu \cite{MV} and defined in \eqref{eq:skew tent} below)
and the generalised $\beta$-transformations $G_{\alpha,\beta}(x) = \beta x + \alpha \pmod 1$.
These generalised $\beta$-transformations were probably first studied by Parry \cite{P}, and
Faller \& Pfister \cite{FP} (using methods similar to Schmeling's) 
proved that the orbit $1$ (and in fact every $x \in [0,1]$),
is typical for the acip for Lebesgue-a.e.\ parameter pair.

The strategy consists of fixing one for the parameters, and showing denseness of the critical orbit (or orbit of $1$)
for almost every value (or in cases a co-$\sigma$-porous set) of the other parameter.
This falls slightly short of two-dimensional co-$\sigma$-porosity, which is left as an open problem.
Also, the result is weaker than \cite[Theorem 2]{FP}, for generalised $\beta$-transformations, but whereas they fix $\alpha$
and vary $\beta$, we prove it the other way around, fixing $\beta$ and varying $\alpha$.
This allows the following an application, namely that if the slope $\beta > 1$ is a multinacci numbers,
 i.e., $1+\beta+\beta^2+ \dots + \beta^{N-1} = \beta^N$
 for some $N \geq 2$, then for Lebesgue-a.e.\ translation parameter $\alpha \in [0,1]$, there is $n \geq 1$ such that $G_{\alpha,\beta}^n(0) = G^n_{\alpha,\beta}(1)$.
 This property is called {\em matching} and has been studied in \cite{BCK17}, where it is shown that matching occurs for all quadratic Pisot slopes and a set of translation parameters whose complement has Hausdorff dimension $< 1$. The tribonacci slope (i.e., $1+\beta+\beta^2 = \beta^3$) is also treated there, but the multinacci case is still open.
\\[4mm]
{\bf Acknowledgments:}
GK was supported by Hungarian National Foundation for Scientific Research, Grant No.\ K124749.
Both authors acknowledges the support of Stiftung A\"OU Project 103\"ou6.

\section{Skew tent maps}

The skew tent maps $T_{\alpha,\beta}:[0,1]\to[0,1]$ are given by
\begin{equation}\label{eq:skew tent}
T_{\alpha,\beta}(x) = \begin{cases}
                       \frac{\beta}{\alpha}\, x, & x \in [0, \alpha],\\
                           \frac{\beta}{1-\alpha}\, (1-x), & x \in [\alpha,1],\\
                      \end{cases}
\end{equation}
for $0 \leq \max\{ \alpha, 1-\alpha\} < \beta \leq 1$.

Fix $\alpha \in (0,1)$ and let $\xi_n(\beta) = T_{\alpha,\beta}^n(\alpha)$, 
where we note that $\alpha$ is the critical point of the skew tent map.
We call $\xi_n:I \to [0,1]$ a {\em branch of $\xi_n$} whenever $I$ is a maximal interval on which $\xi_n$ is monotone.
Let
$$
Q_n(\beta) := \frac{\xi'_n(\beta)}{\frac{\partial}{\partial x} T_{\alpha,\beta}^n(\alpha^-)},
$$
where $\frac{\partial}{\partial x}$ denotes the space derivative and $\alpha^-$ the left limit $\lim_{x \nearrow \alpha}$.
We can compute that $Q_1(\beta) = \frac{\alpha}{\beta}$ and $Q_2(\beta) \geq \min\{ \frac{1}{\alpha}, \frac{1}{1-\alpha} \}$.
We have the recursive formula
\begin{eqnarray*}
 Q_n(\beta) &=& \frac{ \frac{\partial}{\partial \beta} T_{\alpha,\beta}(\xi_{n-1}(\beta)) 
 + \frac{\partial}{\partial x} T_{\alpha,\beta}(\xi_{n-1}(\beta)) \xi'_{n-1}(\beta) }
 {\frac{\partial}{\partial x}  T_{\alpha,\beta}(\xi_{n-1}(\beta)) \frac{\partial}{\partial x}  T^{n-1}_{\alpha,\beta}(\alpha^-) } \\
 &=&  \frac{\frac{\partial}{\partial \beta} T_{\alpha,\beta}(\xi_{n-1}(\beta))}{\frac{\partial}{\partial x}  T_{\alpha,\beta}^n(\alpha^-)}
 + Q_{n-1}(\beta).
\end{eqnarray*}
Since $\left| \frac{\partial}{\partial x} T_{\alpha,\beta}^n(\alpha^-) \right| \geq \lambda^n$ for 
$\lambda := \min\{ \frac{\beta}{\alpha}, \frac{\beta}{1-\alpha} \} > 1$, and
$\left| \frac{\partial}{\partial \beta} T_{\alpha,\beta}(\xi_{n-1}(\beta)) \right| \leq \max\{ \frac{1}{\alpha}, \frac{1}{1-\alpha} \}$,
the sequence $(Q_n(\beta))_{n \geq 1}$ is a Cauchy sequence that converges exponentially fast
to its limit $Q$, and we can check that $Q > 0$.

\begin{lemma}\label{lem:distortion}
 There are $u, C > 0$, depending only on $\alpha \in (0,1)$, such that for every $n$ and branch $\xi_n: I \to [0,1]$ and every $\beta_1 , \beta_2 \in I$,
 $$
\left| \frac{\xi'_n(\beta_2)}{\xi'_n(\beta_1)} - 1 \right| < C e^{-u n}.
$$
\end{lemma}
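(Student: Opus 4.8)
The plan is to use the decomposition $\xi'_n=Q_n\cdot D_n$ with $D_n(\beta):=\tfrac{\partial}{\partial x}T^n_{\alpha,\beta}(\alpha^-)$, together with the exponential convergence $Q_n\to Q>0$ established above, and to show that each branch is so short that the ``geometric'' factor coming from $D_n$ is also $1+o(1)$. First I would record that a branch is combinatorially rigid: if $\xi_n$ is monotone on $I$, then for each $0\le k<n$ the point $\xi_k(\beta)$ must stay on one fixed side of $\alpha$ as $\beta$ runs through $I$ (a crossing would create a kink and destroy monotonicity of $\xi_n$). Writing $a_k\in\{\alpha,1-\alpha\}$ for the corresponding slope denominator, the chain rule gives $D_n(\beta)=\sigma\,\beta^n/P$ with $\sigma\in\{+1,-1\}$ and $P:=\prod_{k=0}^{n-1}a_k$ both constant on $I$, so that
\[
\frac{\xi'_n(\beta_2)}{\xi'_n(\beta_1)}=\frac{Q_n(\beta_2)}{Q_n(\beta_1)}\left(\frac{\beta_2}{\beta_1}\right)^{\!n},\qquad \beta_1,\beta_2\in I,
\]
and it then suffices to bound both factors by $1+O(e^{-un})$.

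Next I would show branches are exponentially short. Since $|D_n|\ge\lambda^n$ with $\lambda=\min\{\beta/\alpha,\beta/(1-\alpha)\}>1$, and $Q_n\to Q>0$ exponentially fast, there is $c>0$ with $Q_n\ge c$ for all large $n$; hence $|\xi'_n|=Q_n|D_n|\ge c\lambda^n$ on $I$, and since $\xi_n$ is monotone on $I$ with $\xi_n(I)\subseteq[0,1]$, this forces $|I|\le c^{-1}\lambda^{-n}$ (the finitely many small $n$ being absorbed into $C$). Because $\beta>\max\{\alpha,1-\alpha\}\ge\tfrac12$, it follows that $|\beta_2/\beta_1-1|\le 2|I|\le 2c^{-1}\lambda^{-n}$, and therefore $\big|(\beta_2/\beta_1)^n-1\big|\le\exp(2n/(c\lambda^n))-1=O(n\lambda^{-n})$.

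For the factor $Q_n(\beta_2)/Q_n(\beta_1)$ I would differentiate the telescoping identity $Q_n=Q_1+\sum_{k=2}^n(Q_k-Q_{k-1})$, whose $k$-th term equals $\partial_\beta T_{\alpha,\beta}(\xi_{k-1})/D_k$; using $D_k'/D_k=k/\beta$, the relations $|\xi'_{k-1}|=Q_{k-1}|D_{k-1}|$ and $|D_{k-1}/D_k|=a_{k-1}/\beta<1$, and the uniform bound $\sup_k Q_k<\infty$, one checks that the derivative of the $k$-th term is $O(1)+O(k\lambda^{-k})$, whence $\sup_I|Q_n'|=O(n)$. Consequently $|Q_n(\beta_2)-Q_n(\beta_1)|\le\sup_I|Q_n'|\cdot|I|=O(n\lambda^{-n})$, and dividing by $Q_n(\beta_1)\ge c$ gives $Q_n(\beta_2)/Q_n(\beta_1)=1+O(n\lambda^{-n})$. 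Multiplying the two estimates yields $\xi'_n(\beta_2)/\xi'_n(\beta_1)=1+O(n\lambda^{-n})\le 1+Ce^{-un}$ on taking, say, $u=\tfrac12\log\lambda>0$ and enlarging $C$ to swallow the polynomial factor.

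The hard part will be the uniformity in $\beta$: both the lower bound $Q_n\ge c$ and the expansion rate $\lambda$ depend a priori on $\beta$, and $\lambda\downarrow1$ as $\beta$ approaches the degenerate endpoint $\max\{\alpha,1-\alpha\}$. Making $u,C$ depend only on $\alpha$ therefore rests on rendering the convergence $Q_n\to Q$ and the positivity $Q>0$ quantitative and uniform — on a compact parameter range this follows from the Cauchy estimates already derived above — and this uniform control of $Q$ is the one genuinely substantive ingredient; the rest is the routine bookkeeping sketched here.
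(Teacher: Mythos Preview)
Your proof is correct and follows the same overall line as the paper's: write $\xi'_n=Q_nD_n$, observe that on a branch $D_n(\beta)=\pm\beta^n/P$ with $P$ constant, deduce that $|I|$ is exponentially small, and conclude that both $(\beta_2/\beta_1)^n$ and $Q_n(\beta_2)/Q_n(\beta_1)$ are $1+O(n\lambda^{-n})$. The only difference is in the treatment of the $Q_n$--ratio. The paper simply invokes the exponential convergence $Q_n\to Q$ and writes $\xi'_n(\beta_i)=Q\,D_n(\beta_i)+\eps_{i,n}$ with ``exponentially small'' $\eps_{i,n}$, then divides; you instead bound $\sup_I|Q_n'|=O(n)$ termwise via the telescoping sum and apply the mean value theorem on the short interval $I$. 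Your route is slightly longer but in fact more careful: the paper's shortcut tacitly uses the \emph{same} limit $Q$ for both $\beta_1$ and $\beta_2$, which already presupposes that $Q_n(\beta_2)-Q_n(\beta_1)$ (or $Q(\beta_2)-Q(\beta_1)$) is exponentially small --- precisely the point your derivative estimate makes explicit. Your closing remark on uniformity in $\beta$ is also well taken; the paper does not address it, and the constants indeed degrade as $\beta\downarrow\max\{\alpha,1-\alpha\}$.
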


\begin{proof}
By the exponential convergence of $Q_n$ together with the exponential growth of
$\frac{\partial}{\partial x} T_{\alpha,\beta}^n(\alpha^-)$, we know that $|I|$ is exponentially small
in $n$. We can assume that $\beta_1 < \beta_2$, so $\Delta\beta := \beta_2-\beta_1 \leq|I|$ is positive and also exponentially small.
In general, 
$$
\frac{\partial}{\partial x} T^n_{\alpha,\beta}(\alpha^-) = 
\beta^n \left( \prod_{j=0}^{n-1} a(\xi_j(\beta)) \right)^{-1}, 
\qquad a(x) = \begin{cases}
               \alpha, & \text{for } x < \alpha,\\
               1-\alpha, & \text{for } x > \alpha.
              \end{cases}
$$
is exponentially large because $\beta > \max\{ \alpha,1-\alpha\}$.
Therefore there are exponentially small errors $\eps_{2,n}$ and $\eps_{1,n}$ such that
\begin{eqnarray*}
 \frac{\xi'_n(\beta_2)}{\xi'_n(\beta_1)} &=& 
 \frac{ Q \frac{\partial}{\partial x} T^n_{\alpha,\beta_2}(\alpha^-) + \eps_{2,n} }
 { Q \frac{\partial}{\partial x} T^n_{\alpha,\beta_1}(\alpha^-) + \eps_{1,n} } 
 = \left( \frac{\beta_2}{\beta_1} \right)^n \frac{Q + \eps_{2,n} \beta_2^{-n}}
 {Q + \eps_{1,n} \beta_1^{-n}} \\[1mm]
 &=& \left( 1 +  \frac{\Delta\beta}{\beta_1} \right)^n \ 
  \frac{Q + \eps_{2,n} \beta_2^{-1}}
 {Q + \eps_{1,n} \beta_1^{-n}} 
 = 1 + O(n \Delta \beta).
\end{eqnarray*}
This proves the lemma.
\end{proof}

We continue to fix parameter $\alpha$.
Let $W_{n-1} = W_{n-1}(\beta)$ be te maximal neighbourhood of $\beta = T_{\alpha,\beta}(c)$ on which $T_{\alpha,\beta}^{n-1}$ 
is monotone, and $M_n = M_n(\beta) = T_{\alpha,\beta}^{n-1}(W_{n-1})$.

\begin{lemma}\label{lem:Mn}
For $n \geq 1$, there are integers $1 \leq r_n, \tilde r_n < n$ such that 
$T_{\alpha,\beta}^{n-1}(\partial W_{n-1}) = \{ T_{\alpha,\beta}^{n-r_n}(c), T^{n-\tilde r_n}(c)\}$
\end{lemma}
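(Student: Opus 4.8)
The statement concerns the set $\partial W_{n-1}$, the two endpoints of the maximal monotonicity interval of $T_{\alpha,\beta}^{n-1}$ around the point $\beta=T_{\alpha,\beta}(\alpha)$. The key observation is that $W_{n-1}$ fails to extend past an endpoint $p$ exactly when some iterate $T_{\alpha,\beta}^j(p)$ with $0\le j\le n-2$ hits the critical point $\alpha$ — that is where the next iterate loses monotonicity. So the plan is: for each of the two endpoints separately, let $r_n-1$ (resp. $\tilde r_n-1$) be the \emph{smallest} such $j$, i.e. the first time the orbit of that endpoint lands on $\alpha$. Then $1\le r_n,\tilde r_n\le n-1$ as required, and $T_{\alpha,\beta}^{j}$ maps a one-sided neighbourhood of that endpoint onto a one-sided neighbourhood of $\alpha$.

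Carrying this out: first I would set up the combinatorial description. $W_{n-1}=\bigcap_{j=0}^{n-2}T_{\alpha,\beta}^{-j}(\text{monotonicity domain of }T_{\alpha,\beta})$ restricted to the component containing $\beta$; since $T_{\alpha,\beta}$ has its single break at $\alpha$, the boundary points of $W_{n-1}$ are precisely the points whose orbit first meets $\{\alpha\}$ at some time $\le n-2$ (or the global boundary $\{0,1\}$, which one checks cannot occur here because $T_{\alpha,\beta}(\alpha)=\beta$ is interior and preimages stay interior — this needs a short argument). Second, fix the left endpoint $p$ of $W_{n-1}$ and let $j=r_n-1$ be minimal with $T_{\alpha,\beta}^{j}(p)=\alpha$. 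By minimality, $T_{\alpha,\beta}^{j-1}$ is a diffeomorphism from a neighbourhood of $p$ onto a neighbourhood of some preimage of $\alpha$, and continuing, $T_{\alpha,\beta}^{n-1}(p)=T_{\alpha,\beta}^{n-1-j}(\alpha)=T_{\alpha,\beta}^{n-r_n}(\alpha)=T_{\alpha,\beta}^{n-r_n}(c)$ in the notation of the excerpt (writing $c=\alpha$). Third, repeat verbatim for the right endpoint to produce $\tilde r_n$, giving the claimed equality of sets $T_{\alpha,\beta}^{n-1}(\partial W_{n-1})=\{T_{\alpha,\beta}^{n-r_n}(c),\,T_{\alpha,\beta}^{n-\tilde r_n}(c)\}$.

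The main obstacle, and the only genuinely nontrivial point, is to rule out the possibility that an endpoint of $W_{n-1}$ is forced by hitting the \emph{global} boundary $\{0,1\}$ of the phase space rather than the critical point $\alpha$ — in that case there would be no valid $r_n$. Here one uses the structure of $T_{\alpha,\beta}$: since $0<\max\{\alpha,1-\alpha\}<\beta\le1$, the map sends $[0,1]$ onto $[0,\beta]\subseteq[0,1]$, the only preimage of $0$ is $\{0,1\}$, and $0$ is fixed; so the orbit of an interior point never reaches $\{0,1\}$, while $\beta=T_{\alpha,\beta}(\alpha)$ is interior. Hence the obstruction to extending $W_{n-1}$ must always be a critical-point encounter, and the base case $n=1$ (where $W_0=[0,1]$ and one reads off $r_1=\tilde r_1$, or the statement is read as vacuous/boundary) is handled directly. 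A secondary bookkeeping issue is simply making sure the index shift is consistent — that "first hit of $\alpha$ at time $j$" translates to exponent $n-1-j$ on $\alpha$, i.e. $r_n=j+1$ — but this is routine once the convention $M_n=T_{\alpha,\beta}^{n-1}(W_{n-1})$ is fixed.
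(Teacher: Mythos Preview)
Your core argument is identical to the paper's: by maximality of $W_{n-1}$, each boundary point $b$ satisfies $T_{\alpha,\beta}^{\,r-1}(b)=c$ for some $1\le r\le n-1$, whence $T_{\alpha,\beta}^{\,n-1}(b)=T_{\alpha,\beta}^{\,n-r}(c)$. The paper's proof is precisely this one line, together with a side remark (which you omit, harmlessly) that $r_n$ and $\tilde r_n$ can be identified via the cutting and co-cutting times of the kneading map.

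Where you go beyond the paper is in attempting to rule out that an endpoint of $W_{n-1}$ lies in $\{0,1\}$ rather than being a preimage of $c$. That argument, however, does not establish what you need: the observation that the \emph{forward} orbit of an interior point never reaches $\{0,1\}$ is correct but irrelevant---what is required is that some preimage of $c$ under $T^{0},\dots,T^{\,n-2}$ separates $\beta$ from $1$ (respectively from $0$). In fact for $n=2$ one has $W_{1}=[\alpha,1]$ (since $\beta>\alpha$ and $T$ is monotone on $[\alpha,1]$), so the right endpoint \emph{is} $1$, and $T(1)=0$ is not of the form $T^{\,2-\tilde r_2}(c)$ for any admissible $\tilde r_2$. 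Thus the lemma as literally stated (``for $n\ge 1$'') is not quite correct for small $n$; neither your argument nor the paper's excludes this, and in the paper's applications only large $n$ matter.
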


\begin{proof}
 Let $\beta \in W_{n-1} =: [b_{n-1}, \tilde b_n]$. By maximality of $W_n$, there is $r_n < n$ such that 
 $T^{r_n-1}(b_{n-1}) = c$ and so $T_{\alpha,\beta}^n(b_{n-1}) =  T_{\alpha,\beta}^{n-r_n}(c)$. Likewise for $\tilde b_{n-1}$ and  $\tilde r_n$.
 In fact, in terms of cutting times $\{ S_k\}_{k \geq 0}$ and co-cutting times $\{ \tilde S_l \}_{l \geq 0}$,
 $b_n = n - \max\{ S_k : S_k < n\}$ and $\tilde b_n = n - \max\{ \tilde S_l : S_l < n\}$,
 see \cite{Bruin95}. 
\end{proof}

This lemma is rather trivial, but it introduces the notation for the next lemma.
Given $n \geq 4$, let $Z_n(\beta)$ be the maximal interval containing the critical value $\beta$ such that
$T_{\alpha,\beta}^{n-1}$ is monotone on $Z_n(\beta)$. 
(If $\beta$ is the common boundary point of two such interval, choose the left one.)
Since $|\frac{\partial}{\partial x} T^n_{\alpha,\beta}|$ is exponentially large and due to Lemma~\ref{lem:distortion}, 
$Z_n(\beta)$ is exponentially small.

\begin{lemma}\label{lem:xi}
Let $\beta_n$ and $\tilde \beta_n$ be the boundary points of $Z_n(\beta)$.
Then (possibly after swapping $\beta_n$ and $\tilde \beta_n$) we have
$\xi_n(\beta_n) = \xi_{n-r_n}(\beta_n)$ and $\xi_n(\tilde \beta_n) = \xi_{n-\tilde r_n}(\tilde \beta_n)$.
Moreover the quotient
$$
q: \beta' \mapsto \frac{\xi_n(\beta') - \xi_{n-r_n}(\beta')}{\xi_{n-\tilde r_n}(\beta') - \xi_{n-r_n}(\beta')}
$$
is a monotone map from $Z_n(\beta)$ onto $[0,1]$.
\end{lemma}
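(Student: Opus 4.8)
The plan is to analyze the map $q$ on $Z_n(\beta)$ by unravelling what happens at the endpoints and what drives the monotonicity in the interior. First I would establish the two endpoint identities. By definition $Z_n(\beta)$ is the maximal interval of monotonicity of $T_{\alpha,\beta}^{n-1}$ through $\beta$; writing $\beta = T_{\alpha,\beta}(c)$ and comparing with $W_{n-1}(\beta)$ from Lemma~\ref{lem:Mn}, the boundary points $\beta_n, \tilde\beta_n$ are exactly the parameters where the orbit segment $\xi_1, \dots, \xi_{n-1}$ first hits the critical point $c=\alpha$, at the ``times'' $r_n$ and $\tilde r_n$ respectively. At such a parameter, $T_{\alpha,\beta}^{n}$ folds: $\xi_n(\beta_n) = T_{\alpha,\beta_n}^{n}(\alpha)$ coincides with $T_{\alpha,\beta_n}^{n-r_n}(\alpha) = \xi_{n-r_n}(\beta_n)$ because the orbit passes through the critical point $r_n$ steps before reaching level $n$. (Cf. the cutting/co-cutting-time bookkeeping cited from \cite{Bruin95} in Lemma~\ref{lem:Mn}.) This gives, after possibly swapping the names, $\xi_n(\beta_n) = \xi_{n-r_n}(\beta_n)$ and $\xi_n(\tilde\beta_n) = \xi_{n-\tilde r_n}(\tilde\beta_n)$, so $q(\beta_n) = 0$ and $q(\tilde\beta_n) = 1$.

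Next I would prove that $q$ is monotone on the whole of $Z_n(\beta)$, which together with continuity and the endpoint values yields surjectivity onto $[0,1]$. The numerator $N(\beta') := \xi_n(\beta') - \xi_{n-r_n}(\beta')$ and the analogous difference $D(\beta') := \xi_{n-\tilde r_n}(\beta') - \xi_{n-r_n}(\beta')$ are each real-analytic (indeed piecewise-smooth, but on $Z_n(\beta)$ all the relevant $\xi_j$ are branches, hence smooth) on $Z_n(\beta)$, and $q = N/D$. To see monotonicity, differentiate: $q' = (N'D - ND')/D^2$, so it suffices to show $N'D - ND'$ has constant sign. Here the key input is Lemma~\ref{lem:distortion}: on the exponentially short interval $Z_n(\beta)$, each derivative $\xi'_j(\beta')$ is, up to a factor $1 + O(e^{-uj})$, equal to $Q \cdot \frac{\partial}{\partial x}T^j_{\alpha,\beta'}(\alpha^-)$, whose modulus is $Q\beta'^j \prod_{i<j} a(\xi_i(\beta'))^{-1}$ — exponentially large in $j$, with a fixed sign on the branch. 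Consequently $\xi'_n$ dominates $\xi'_{n-r_n}$ and $\xi'_{n-\tilde r_n}$ by a factor that is exponentially large in $\min\{r_n,\tilde r_n\} \geq 1$, so $N' \approx \xi'_n$ in sign and magnitude; a parallel estimate controls $D'$. Combining these with the endpoint data (and noting $N, D$ do not both vanish in the interior, since that would contradict maximality of $Z_n(\beta)$) forces $N'D - ND'$ to keep a constant sign, so $q$ is strictly monotone.

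I expect the main obstacle to be the monotonicity argument when $r_n$ and $\tilde r_n$ are close to each other or when one of them is small, so that the ``exponential domination'' of $\xi'_n$ over the lower-level terms is by a bounded (rather than large) factor; in that regime one cannot simply throw away the lower-order terms and must track the signs carefully through the recursion, using that $Q>0$ and that the space derivatives $\frac{\partial}{\partial x}T^j_{\alpha,\beta'}(\alpha^-)$ have a sign determined purely by the combinatorics of the branch (the parity of the number of right-branch visits), which is constant across $Z_n(\beta)$. A clean way to organize this is to change coordinates by $\psi := \xi_n - \xi_{n-r_n}$ and show directly that $\psi$ is a diffeomorphism from $Z_n(\beta)$ onto an interval with one endpoint $0$, and likewise that $\xi_{n-\tilde r_n} - \xi_{n-r_n}$ is monotone with the matching orientation, so that their ratio $q$ inherits monotonicity; the distortion lemma is exactly what makes these two monotonicities compatible in orientation. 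Finally, continuity of $q$ on the closed interval $Z_n(\beta)$ together with $q(\beta_n)=0$, $q(\tilde\beta_n)=1$, and strict monotonicity gives that $q$ maps $Z_n(\beta)$ onto $[0,1]$, as claimed.
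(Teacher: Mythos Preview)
Your approach is essentially the paper's: endpoint identities via maximality of $Z_n(\beta)$ (so that $\xi_{r_n}(\beta_n)=c$ forces $\xi_n(\beta_n)=\xi_{n-r_n}(\beta_n)$), then monotonicity from the fact that $\xi'_n$ dominates the lower-order derivatives. The paper's argument is, however, a one-liner and does not invoke Lemma~\ref{lem:distortion}: it simply uses $|\xi'_n|>|\xi'_{n-r_n}|$ and $|\xi'_n|>|\xi'_{n-\tilde r_n}|$, which makes both $N=\xi_n-\xi_{n-r_n}$ and $D-N=\xi_{n-\tilde r_n}-\xi_n$ strictly monotone with opposite orientations; writing $q=N/(N+(D-N))$ then gives $q'$ of constant sign immediately. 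Any domination factor strictly greater than $1$ suffices here, so your worry about small $r_n,\tilde r_n$ and the detour through the distortion estimate are unnecessary, and your side remark that ``a parallel estimate controls $D'$'' is in fact the weak point (since $D'=\xi'_{n-\tilde r_n}-\xi'_{n-r_n}$ does not involve $\xi'_n$) --- working with $N$ and $D-N$ instead of $N$ and $D$ avoids this issue entirely.
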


\begin{proof}
By maximality of $Z_n(\beta)$, at the boundary points $\beta_n$ and $\tilde \beta_n$ of $Z_n(\beta)$, there must be
 integers $r_n$ and $\tilde r_n$ such that $\xi_{r_n}(\beta_n) = c = \xi_{\tilde r_n}(\tilde \beta_n)$.
Now as $\beta'$ moves through the interior of $Z_n(\beta)$, we have
$c \in T_{\alpha,\beta}^{r_n}(\partial W_{n-1}(\beta'))$ and $c \in T_{\alpha,\beta}^{\tilde r_n}(\partial W_{n-1}(\beta'))$.
This shows that $r_n$ and $\tilde r_n$ depend only on $Z_n(\beta)$ (that is, on $\beta$ only),
and by swapping the boundary points on $Z_n(\beta)$ if necessary, the integers $r_n$ and $\tilde r_n$ are 
the same as those in Lemma~\ref{lem:Mn}.
 Finally, $q$ is clearly continuous, and thus onto $[0,1]$.
 Since the slope of $\xi_n$ is larger than the slopes of $\xi_{n-r_n}$ and $\xi_{n-\tilde r_n}$, the quotient $q$ is indeed a monotone function.
\end{proof}

Given a point $x \in [0,1] \setminus \{ \alpha\}$, let the involution $\hat x$ be the point different from $x$ such that 
$T_{\alpha,\beta}(\hat x) = T_{\alpha,\beta}(x)$. Let $p = \frac{\beta}{1-\alpha+\beta} \in [\alpha, \beta]$ 
be the orientation reversing fixed point of
$T_{\alpha,\beta}$. Note that $T_{\alpha,\beta}^2(\alpha) < \hat p < p < T_{\alpha,\beta}(\alpha)$.

\begin{prop}\label{prop:JH}
 Let $T_{\alpha,\beta}$ be a skew tent map with $0 < \min \{ \alpha, 1-\alpha\} < \beta \leq 1$.
 Then there exists $\eta \in (0,1)$ and arbitrary small neighbourhoods $J$ of the critical point $c = \alpha$ 
 for which there are intervals $H \subset J$ with $|H| \geq \eta |J|$ and $n \in \N$ such that 
 $T^n_{\alpha,\beta}$ maps $H$ monotonically onto $[\hat p, p]$.
\end{prop}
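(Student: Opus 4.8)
The plan is to work in phase space with $(\alpha,\beta)$ fixed (assuming $\beta>\max\{\alpha,1-\alpha\}$, so that both branch slopes $s_-=\beta/\alpha$, $s_+=\beta/(1-\alpha)$ exceed $1$; the remaining range is similar), and to produce each $J$ as a thin set $V_m^+\cup[\alpha-|V_m^+|,\alpha]$, where $V_m^+=[\alpha,v_m^+]$ is the maximal interval having $\alpha$ as left endpoint on which $T:=T_{\alpha,\beta}$ iterated $m$ times is monotone. Since $T$ is piecewise linear, $T^m$ is affine on $V_m^+$ (no distortion), and $|V_m^+|\to0$ because $|(T^m)'|\ge\lambda^m$ there with $\lambda=\min\{s_-,s_+\}>1$. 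Back-tracking the preimages of $\alpha$ that define $v_m^+$ shows by induction that $T^m(v_m^+)=\beta$, so $T^m(V_m^+)=\operatorname{conv}\{\xi_m,\beta\}$ with $\xi_m:=T^m(\alpha)$; more generally, as long as $T^{m+k}$ stays affine on $V_m^+$ — i.e.\ $\alpha$ stays off the interior of $\operatorname{conv}\{\xi_{m+j},\xi_{j+1}\}$ for $j<k$ — one has $T^{m+k}(V_m^+)=\operatorname{conv}\{\xi_{m+k},\xi_{k+1}\}$.

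Recall $\xi_2<\hat p<p<\xi_1=\beta$ and $T(p)=p$. Call $m$ a \emph{good time} if $\xi_m\le\hat p$ or $\xi_m\in[\alpha,p]$. If $\xi_m\le\hat p$ then $T^m(V_m^+)=[\xi_m,\beta]\supseteq[\hat p,p]$; if $\alpha\le\xi_m\le p$ then $T^{m+1}$ is still affine on $V_m^+$ (as $\xi_m\ge\alpha$, so $\alpha\notin\operatorname{int}[\xi_m,\beta]$) and $T^{m+1}(V_m^+)=[\xi_2,\xi_{m+1}]$ with $\xi_{m+1}=s_+(1-\xi_m)\ge s_+(1-p)=p$, hence again $\supseteq[\hat p,p]$. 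In either case the image lies in the core $[\xi_2,\beta]$, so with $H:=(T^m|_{V_m^+})^{-1}([\hat p,p])$ (resp.\ using $T^{m+1}$) and $J:=V_m^+\cup[\alpha-|V_m^+|,\alpha]$ one gets $T^m(H)=[\hat p,p]$ monotonically, $|H|\ge\frac{|[\hat p,p]|}{|[\xi_2,\beta]|}|V_m^+|$, and $|J|=2|V_m^+|$, so $|H|\ge\eta|J|$ for $\eta:=\tfrac12|[\hat p,p]|/|[\xi_2,\beta]|$. Since $|V_m^+|\to0$, this proves the proposition whenever there are arbitrarily large good times.

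Suppose instead there are only finitely many good times. Using $T((\hat p,\alpha))=(p,\beta)$, $T((p,\beta])=[\xi_2,p)$ and $\xi_2<\hat p$, a short calculation shows the critical orbit then cannot approach $\hat p$, $\alpha$, $p$ or $\beta$: if $\xi_m$ is within $\varepsilon$ of one of these, then within two iterates $\xi_{m+j}$ is within $O(\varepsilon)$ of $\xi_2$ (hence $\le\hat p$, if the approach is to $\alpha$ from the left or to $\beta$), or lies in $[\alpha,p]$ (if to $\alpha$ from the right), or is within $O(\varepsilon)$ of $p$ from below (if to $\hat p$ or $p$) — each a good time. Hence $\{\xi_m:m\ge m_0\}$ lies in a compact $\Lambda\subset[\xi_2,\beta]$ with $\operatorname{dist}(\Lambda,\{\hat p,\alpha,p,\beta\})>0$; in particular $\xi_l\ge\hat p+\delta_0$ for $l\ge m_0$, some $\delta_0>0$. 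Now any monotone branch onto $[\hat p,p]$ has the form $T^k|_N$ for a maximal affine domain $N$ with $T^k(N)\supseteq[\hat p,p]$, and the endpoints of $T^k(N)$ are critical-orbit points or $0$. As $[\hat p,p]\subseteq T^k(N)$, its left endpoint is $\le\hat p$, hence — since every $\xi_l$ with $l\ge m_0$ exceeds $\hat p+\delta_0$ — lies in the fixed finite set $\{0\}\cup\{\xi_l:l<m_0\}$, so it is $\le\hat p-c$ for a uniform $c>0$; likewise the right endpoint is $\ge p$ with a uniform gap (using again that $\Lambda$ and $\{\xi_l:l<m_0\}$ avoid $p$). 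Thus every branch-cover of $[\hat p,p]$ has a uniform amount of Koebe space on both sides, so — distortion being trivial and $T$ being locally eventually onto on its core, whence preimages of $\alpha$, and so branch-domains of $[\hat p,p]$, are dense at $\alpha$ — standard bounded-geometry estimates yield branch-domains $H$ of $[\hat p,p]$ inside arbitrarily small neighbourhoods $J$ of $\alpha$ with $|H|\ge\eta|J|$.

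The main obstacle is this dichotomy together with the Koebe-space control in the confined case: showing that a failure of the elementary combinatorial condition forces the critical orbit uniformly off $\{\hat p,\alpha,p,\beta\}$, and that then one endpoint of every branch-cover of $[\hat p,p]$ originates in the bounded initial segment of the critical orbit, which is what pins the geometry of the pull-backs.
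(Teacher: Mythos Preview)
The central claim that $T^m(v_m^+)=\beta$ for every $m\ge 2$—equivalently, that $T^m(V_m^+)=\operatorname{conv}\{\xi_m,\beta\}$—is false, and this breaks the good-times argument. The induction you allude to only goes through while $\alpha$ lies in the interior of $M_{m-1}:=T^{m-1}(V_{m-1}^+)$, for then $V_m^+\subsetneq V_{m-1}^+$ and $T^{m-1}(v_m^+)=\alpha$. But as soon as some $\xi_{m-1}\ge\alpha$ (so $M_{m-1}=[\xi_{m-1},\beta]\subset[\alpha,1]$) one has $V_m^+=V_{m-1}^+$ and $T^m(v_m^+)=T(\beta)=\xi_2$, not $\beta$; thereafter the ``other endpoint'' of $T^m(V_m^+)$ wanders through the critical orbit. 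Concretely, for the symmetric case $\alpha=\tfrac12$, $\beta=0.9$ one finds $\xi_4\approx 0.583>\alpha$, so the structure breaks at $m=5$; following through, $m=8$ is a good time of the first type ($\xi_8\approx 0.344<\hat p\approx 0.357$), yet $T^8(V_8^+)=\operatorname{conv}\{\xi_8,\xi_2\}\approx[0.18,0.344]$, which is \emph{disjoint} from $[\hat p,p]\approx[0.357,0.643]$. So a good time $m$ need not give $T^m(V_m^+)\supseteq[\hat p,p]$, and you obtain no $H$.

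The non-recurrent half is also incomplete: the Koebe-space reasoning gives a uniform lower bound on $|H|/|N|$ where $N$ is the full branch domain covering $[\hat p,p]$, but the proposition asks for $|H|\ge\eta|J|$ with $J$ a \emph{neighbourhood of $\alpha$}. Branch domains accumulating on $\alpha$ are not in general adjacent to $\alpha$, so the passage from $|H|/|N|\ge\eta'$ to $|H|/|J|\ge\eta$ for some $J\ni\alpha$ needs a further argument that you do not supply. The paper avoids both issues by an entirely different route: an inductive pull-back producing nested neighbourhoods $J_k\ni c$ with subintervals $H_k,\hat H_k$ (one on each side of $c$) whose iterated images equal $[\hat p,p]$, controlling the ratio $|H_k|/|J_k|$ via a convergent infinite product $\prod_k(1-|J'_{k+1}|/|J_k|)$; no claim about the shape of $T^m(V_m^+)$ is needed.
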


\begin{proof}
 Let $J_0 = [\hat p, p]$ and $H_0 \subset [c,p]$ be such that $T_{\alpha,\beta}^2(H_0) = [\hat p, p]$.
 First we assume that the critical point $c$ is recurrent; the proof of the proposition is simple otherwise.
 
 We construct neighbourhoods $J_k\owns c$ with subintervals $H_k$ and $\hat H_k$ adjacent to the endpoints of $J_k$ inductively.
 We always set the ratio 
 $$
 r_k = \frac{|H_k|}{|L_k|} = \frac{|\hat H_k|}{|\hat L_k|} = \frac{|\hat H_k \cup \hat H_k|}{|J_k|},
 $$ 
 where $L_k$ is the component of $J_k \setminus \{ c \}$ containing $H_k$ and $\hat L_k$ is the other component.
 
 Suppose $J_k$ and $H_k, \hat H_k \subset J_k$ are known and assume by induction that 
 \begin{equation}\label{eq:ind}
  \orb(\partial J_k) \cap \mathring J_k = \emptyset,
 \end{equation}
where $\mathring{\ }$ denotes the interior.
 Let $m_k = \min\{ n \geq 1 : T_{\alpha,\beta}^n(c) \in J_k \setminus (H_k \cup \hat H_k)\}$, and let
 $J'_{k+1}$ be the maximal neighbourhood of $c$ such that $T_{\alpha,\beta}^{m_k}(J'_{k+1}) \subset J_k$.
 By the inductive assumption, this means that $T_{\alpha,\beta}^{m_k}(\partial J'_{k+1}) \subset \partial J_k$.
 
 We claim that there are $C, u > 0$ independently of $k$ such that
 \begin{equation}\label{eq:ratio}
  \frac{|J'_{k+1}|}{|J_k|} \leq C e^{-uk}.
 \end{equation}
This is because, for any $x \in J'_{k+1} \setminus \{ c \}$ and $J_x$ the component of $ J'_{k+1} \setminus \{ c \}$
containing $x$,
$$
\left|\frac{\partial}{\partial x}  T_{\alpha,\beta}^{m_k}(x) \right| = 
\frac{ |T_{\alpha,\beta}^{m_k}(J'_{k+1})|}{|J_x|}
\leq \frac{|J_k|}{|J'_{k+1}|}.
$$
But the derivative $\frac{\partial}{\partial x}  T_{\alpha,\beta}^{m_k}$ is exponentially large and $n_k \geq k$,
so there are $C, u > 0$ depending only on $\alpha,\beta$ such that
$\frac{|J'_{k+1}|}{|J_k|} \leq C e^{-uk}$ which is \eqref{eq:ratio}.
As a consequence, $\eta := r_0 \prod_{i=0}^{\infty} \left(1-\frac{ |J'_{i+1}|}{|J_i|}\right) > 0$.

 Now there are two cases:
 \begin{itemize}
  \item $T_{\alpha,\beta}^{m_k}(J'_{k+1}) \not\supset J'_{k+1}$.
  In this case, we set $n_k = m_k$, $J_{k+1} = J'_{k+1}$ and by \eqref{eq:ind} we have 
  \begin{equation}\label{eq:ind1}
  \orb(\partial J_{k+1}) \cap \overline{J_k}  \subset \partial J_k.
 \end{equation}
 Thus we can take $H_{k+1}$ and $\hat H_{k+1} \subset J_{k+1}$ such that $T_{\alpha,\beta}^{n_k}(H_{k+1})
 = T_{\alpha,\beta}^{n_k}(\hat H_{k+1})$ equals $H_k$ or $\hat H_k$, say it is $H_k$ and $J^+_k$ is the component
 of $J_k \setminus \{ c\}$ containing $H_k$.
 Because the branches of $T_{\alpha,\beta}^{n_k}|_{J_{k+1}}$ are affine
 \begin{equation}\label{eq:rk}
 r_{k+1} = \frac{|H_k|}{|T_{\alpha,\beta}^{n_k}(J_{k+1}) |}
 \geq \frac{|H_k|}{|L_k \cup J_{k+1}|}
 \geq r_k \frac{|L_k|}{|L_k \cup J_{k+1}|} \geq r_k \left(1-\frac{|J'_{k+1}|}{|J_k|}\right).
 \end{equation}
 \item  $T_{\alpha,\beta}^{m_k}(J'_{k+1}) \supset J'_{k+1}$. In this case choose
 $n_k = \min\{ n \geq 1 : T_{\alpha,\beta}^n(c) \in J'_{k+1}\}$ and let $J_{k+1}$ 
 be the maximal neighbourhood of $c$ such that $T_{\alpha,\beta}^{n_k}(J_{k+1}) \subset J_k$.
 By \eqref{eq:ind} and \eqref{eq:ind1}, again $\orb(\partial J_{k+1}) \cap \mathring J_{k+1} = \emptyset$
 and $T_{\alpha,\beta}^{n_k}(\partial J_{k+1}) \subset \partial J_k$.
  Thus we can take $H_{k+1}$ and $\hat H_{k+1} \subset J_{k+1}$ such that $T_{\alpha,\beta}^{n_k}(H_{k+1})
 = T_{\alpha,\beta}^{n_k}(\hat H_{k+1})$ equals $H_k$ or $\hat H_k$.
 Also here \eqref{eq:rk} can be verified in the same way.
 \end{itemize}
This concludes the inductive construction, and we have 
$$
T_{\alpha,\beta}^{N_k}(H_k) = T_{\alpha,\beta}^{N_k}(\hat H_k) = [\hat p,p]
\qquad \text{ for }
N_k := n_k+n_{k-1} + \dots + n_0 + 2.
$$
By \eqref{eq:rk} also $r_k \geq r_0 \prod_{i=0}^{k-1} \left(1-\frac{ |J'_{i+1}|}{|J_i|} \right) \geq \eta$,
so the proposition follows.
\end{proof}

\begin{lemma}\label{lem:Borel}
 The set $A = \{ (\alpha,\beta) : \orb(c) \text{ is not dense for } T_{\alpha,\beta}\}$
 is a Borel set.
\end{lemma}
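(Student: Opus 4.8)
The plan is to prove the equivalent assertion that the complement $D := \{(\alpha,\beta) : \orb(c)\text{ is dense in }[0,1]\}$ is Borel, so that $A$ is Borel as well. Work inside the parameter domain $\Delta = \{(\alpha,\beta)\in\R^2 : \max\{\alpha,1-\alpha\}<\beta\le1\}$, which is itself Borel, being the intersection of two open half-planes with one closed half-plane. The key point is that for each fixed $n\ge 0$ the map $\xi_n : \Delta\to[0,1]$, $\xi_n(\alpha,\beta):=T^n_{\alpha,\beta}(\alpha)$, is continuous. This is seen by induction on $n$: the evaluation map $(\alpha,\beta,x)\mapsto T_{\alpha,\beta}(x)$ is jointly continuous on $\Delta\times[0,1]$, since the two branches in \eqref{eq:skew tent} are continuous and agree (both equal $\beta$) along the line $x=\alpha$, so switching branches causes no discontinuity; as $\xi_0(\alpha,\beta)=\alpha$ is continuous and $\xi_{n+1}(\alpha,\beta)=T_{\alpha,\beta}(\xi_n(\alpha,\beta))$, every $\xi_n$ is continuous.

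Now fix a countable basis $\{U_i\}_{i\in\N}$ for the topology of $[0,1]$, for instance the sets $(p,q)\cap[0,1]$ with $p<q$ rational. The orbit $\orb(c)=\{\xi_n(\alpha,\beta):n\ge0\}$ is dense in $[0,1]$ precisely when it meets every $U_i$, that is,
\[D \;=\; \bigcap_{i\in\N}\ \bigcup_{n\ge0}\ \xi_n^{-1}(U_i).\]
By continuity, each $\xi_n^{-1}(U_i)$ is relatively open in $\Delta$, hence Borel, and Borel sets are closed under countable unions and intersections; therefore $D$ is Borel and so is $A=\Delta\setminus D$. (Concretely, $A=\bigcup_{i}\bigcap_n \xi_n^{-1}([0,1]\setminus U_i)$ is $F_\sigma$ relative to $\Delta$.)

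I expect no serious obstacle in this proof: the only step that deserves a second glance is the joint continuity of $T_{\alpha,\beta}(x)$ at the critical point, which holds exactly because the family $T_{\alpha,\beta}$ was defined to be continuous. If ``$\orb(c)$ dense'' is instead meant relative to the dynamical core $[T^2_{\alpha,\beta}(\alpha),T_{\alpha,\beta}(\alpha)]$ (or relative to $[\hat p,p]$), the same scheme works verbatim after intersecting, for each $i$, with the set of parameters for which $U_i$ meets that core --- a Borel condition, since its endpoints $\xi_2(\alpha,\beta)$ and $\xi_1(\alpha,\beta)=\beta$ depend continuously on $(\alpha,\beta)$ --- so that $D$, and hence $A$, remains Borel.
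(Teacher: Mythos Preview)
Your proof is correct and follows essentially the same approach as the paper: write $A=\bigcup_j\bigcap_n\xi_n^{-1}([0,1]\setminus U_j)$ using a countable basis $\{U_j\}$ and use that each $\xi_n$ is continuous. You supply the extra (and welcome) detail that $(\alpha,\beta,x)\mapsto T_{\alpha,\beta}(x)$ is jointly continuous, which the paper leaves implicit; your formula for $A$ also has the $\bigcup$ and $\bigcap$ in the correct order.
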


\begin{proof}
Let $\{ U_j\}_{j \in \N}$ be countable basis of the topology on $[0,1]$.
Then $\xi_n^{-1}([0,1] \setminus U_j)$ is closed and
$A = \cap_j \cup_n \xi_n^{-1}([0,1] \setminus U_j)$ is Borel.
\end{proof}

\begin{theorem}\label{thm:skew-non-dense}
 The set of parameters $(\alpha,\beta)$ for which the critical point of $T_{\alpha,\beta}$ has a non-dense orbit
 has zero Lebesgue measure.
\end{theorem}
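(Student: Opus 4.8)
The plan is to pass to a single parameter via Fubini and then establish a Lebesgue--density statement. Since $A$ is Borel (Lemma~\ref{lem:Borel}), Fubini's theorem reduces the claim to showing that for every fixed $\alpha\in(0,1)$ the slice $A_\alpha=\{\,\beta:\orb(c)\text{ is not dense in the core }[\xi_2(\beta),\xi_1(\beta)]\text{ of }T_{\alpha,\beta}\,\}$ is Lebesgue null; here $\xi_n(\beta)=T^n_{\alpha,\beta}(\alpha)$, so $\xi_1(\beta)=\beta$. Fix such an $\alpha$ and a countable basis $\{U_j\}_{j\in\N}$ of the open subintervals of $(0,1)$. Then $A_\alpha\subseteq\bigcup_j\tilde B_j$ with $\tilde B_j:=\{\beta:\overline{U_j}\subset(\xi_2(\beta),\xi_1(\beta))\text{ and }\xi_n(\beta)\notin U_j\text{ for all }n\ge0\}$, and each $\tilde B_j$ is Borel; it therefore suffices to show every $\tilde B_j$ is null, and for this (by the Lebesgue density theorem) that no $\beta_0\in\tilde B_j$ is a density point of $\tilde B_j$.

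Fix $j$ and $\beta_0\in\tilde B_j$, and for $n\ge4$ let $Z_n(\beta_0)$ be the interval of monotonicity of $\xi_n$ around $\beta_0$ (the interval $Z_n$ of Lemma~\ref{lem:xi}), which by Lemma~\ref{lem:distortion} together with the exponential growth of $\frac{\partial}{\partial x}T^n_{\alpha,\beta}(\alpha^-)$ has exponentially small length. I claim there is a constant $\eta''>0$, independent of $n$, such that for a sequence of $n\to\infty$ the window $Z_n(\beta_0)$ contains an open subinterval of relative length $\ge\eta''$ disjoint from $B_j:=\{\beta:\xi_n(\beta)\notin U_j\ \forall n\}\supseteq\tilde B_j$. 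Granting this, $|Z_n(\beta_0)\cap\tilde B_j|\le(1-\eta'')|Z_n(\beta_0)|$ for these $n$; letting $r_n$ be the larger of the two distances from $\beta_0$ to the endpoints of $Z_n(\beta_0)$, so that $(\beta_0-r_n,\beta_0+r_n)\supseteq Z_n(\beta_0)$ while $|Z_n(\beta_0)|\ge r_n$, one gets $|\tilde B_j\cap(\beta_0-r_n,\beta_0+r_n)|\le 2r_n-\eta''|Z_n(\beta_0)|\le(2-\eta'')r_n$, i.e.\ relative density at most $1-\tfrac{\eta''}{2}<1$ at the scales $r_n\to0$; hence $\beta_0$ is not a density point of $\tilde B_j$, and the theorem follows.

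The subinterval is obtained by transporting Proposition~\ref{prop:JH} into parameter space. By Lemma~\ref{lem:xi}, $\xi_n$ maps $Z_n(\beta_0)$ monotonically and bijectively onto an interval whose endpoints are the values at $\partial Z_n(\beta_0)$ of the earlier branches $\xi_{n-r_n},\xi_{n-\tilde r_n}$ of Lemma~\ref{lem:Mn}, and by Lemma~\ref{lem:distortion} this bijection has distortion at most $1+Ce^{-un}\to1$; so, up to controlled distortion, moving $\beta$ across $Z_n(\beta_0)$ amounts to moving the critical value across the dynamical window $M_n=T^{n-1}_{\alpha,\beta_0}(W_{n-1})$. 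Carrying out the inductive construction in the proof of Proposition~\ref{prop:JH} at $\beta_0$ while recording the $\beta$-dependence of the nested neighbourhoods $J_k$ (which depend continuously on $\beta$, with the intervening return times locally constant) and of the subintervals $H_k,\hat H_k$ (whose endpoints lie on the critical orbit and move with derivatives $\xi_m'$ governed by Lemma~\ref{lem:distortion}) produces, inside $Z_n(\beta_0)$ for the depths $n$ occurring in that construction, a subinterval $\Omega$ with $|\Omega|/|Z_n(\beta_0)|\ge\tfrac{\eta}{2}$ ($\eta$ being the constant of Proposition~\ref{prop:JH}, which depends on $\alpha$ only) and an integer $m$ with $\xi_m:\Omega\to[\hat p,p]$ monotone, onto, and of distortion tending to $1$. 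Now $[\hat p,p]$ is a subinterval of the core of definite size, and its forward $T_{\alpha,\beta}$-orbit covers the core (in the renormalisable case one first passes to the first-return map on $[\hat p,p]$, itself of skew-tent type, and iterates). Since $\beta_0\in\tilde B_j$ gives $\overline{U_j}\subset(\xi_2(\beta_0),\xi_1(\beta_0))$, continuity in $\beta$ lets us fix once and for all a subinterval $V\subseteq[\hat p,p]$ and an integer $k$ with $\overline{T^k_{\alpha,\beta}(V)}\subset U_j$ for all $\beta$ in a neighbourhood of $\beta_0$. For the $n$ in our sequence that are large enough, $\xi_m^{-1}(V)\cap\Omega\subseteq Z_n(\beta_0)$ lies in that neighbourhood, and by bounded distortion its relative length inside $Z_n(\beta_0)$ is at least $\eta'':=\tfrac{\eta}{4}\,|V|/|[\hat p,p]|>0$, which is independent of $n$; every $\beta$ in it satisfies $\xi_{m+k}(\beta)=T^k_{\alpha,\beta}(\xi_m(\beta))\in T^k_{\alpha,\beta}(V)\subseteq U_j$, hence lies in $\xi_{m+k}^{-1}(U_j)$ and outside $B_j\supseteq\tilde B_j$. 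This establishes the claim; a more careful accounting (splitting each $\tilde B_j$ by the constants that occur) would even show the slices $A_\alpha$ to be $\sigma$-porous.

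The main obstacle is the first half of the previous paragraph: making the construction in the proof of Proposition~\ref{prop:JH} genuinely parametric, so that its proportion $\eta$ is inherited by the parameter window $\Omega$ --- this is exactly where Lemmas~\ref{lem:distortion}, \ref{lem:Mn} and~\ref{lem:xi} are used, to match parameter windows with dynamical windows up to distortion tending to $1$. The renormalisation bookkeeping behind ``the orbit of $[\hat p,p]$ covers the core'' is routine by comparison, and the return to $|A|=0$ via Fubini is immediate.
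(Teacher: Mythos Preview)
Your global plan---Fubini to fix $\alpha$, a countable basis $\{U_j\}$, and a density/porosity argument on each $\tilde B_j$ using Proposition~\ref{prop:JH}---is exactly the paper's. The paper in fact asserts the slices $A_\alpha$ are $\sigma$-porous, but your density-point version gives nullity just as well.

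The gap is in your middle paragraph, precisely at the step you flag as ``the main obstacle''. You write that carrying out the inductive construction of Proposition~\ref{prop:JH} ``at $\beta_0$ while recording the $\beta$-dependence'' produces, ``for the depths $n$ occurring in that construction'', a subinterval $\Omega\subset Z_n(\beta_0)$ of relative size $\ge\eta/2$ mapping onto $[\hat p,p]$. But you never say \emph{which} $n$, nor why the parameter image $\xi_n(Z_n(\beta_0))$ should contain the dynamical interval $J_k$ (or $H_k$); Lemmas~\ref{lem:distortion}--\ref{lem:xi} control distortion and identify the endpoints of $\xi_n(Z_n(\beta_0))$ as values of earlier $\xi_{n-r_n},\xi_{n-\tilde r_n}$, but they do not by themselves force this image to be large. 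The paper supplies the missing idea: one does \emph{not} make the whole tower parametric. Instead, working at a fixed recurrent $\beta_0$, one takes a closest-approach time $n'$, lets $k$ be maximal with $\xi_{n'}(\beta_0)\in J_k$, and then takes $n\le n'$ to be the \emph{smallest} closest-approach time with $\xi_n(\beta_0)\in J_k$. Minimality of $n$ is what forces $\xi_n(Z_n(\beta_0))\supset J_k$: the boundary of $Z_n(\beta_0)$ corresponds to earlier passages $\xi_m(\beta)=c$, none of which (by the choice of $n$ and the nesting property $\orb(\partial J_k)\cap\mathring J_k=\emptyset$ from Proposition~\ref{prop:JH}) lie inside $J_k$. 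Once $\xi_n(Z_n(\beta_0))\supset J_k\supset H_k$ is in hand, the hole is $\xi_n^{-1}\big(H_k\cap T_{\alpha,\beta_0}^{-N_k}(U_j)\big)$, of relative size $\ge\eta|U_j|$ by bounded distortion, and the residual $\beta$-dependence of finitely many further iterates is absorbed by continuity as you say.

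A smaller point: the paper also splits off the case $c\in U_j$ (so $\beta_0\in\tilde B_j$ means $c$ is non-recurrent) and treats it by a direct one-line argument---whenever $\xi_n(Z_n(\beta_0))\ni c$, necessarily $\xi_n(Z_n(\beta_0))\supset U_j$ since the orbit has never entered $U_j$. Your appeal to ``the inductive construction in the proof of Proposition~\ref{prop:JH}'' covers only the recurrent case, so you would need to say a word about this as well.
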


\begin{proof}
First fix $\alpha \in (0,1)$. Then we show that the set of parameters $\beta$ for which the critical point of $T_{\alpha,\beta}$ 
has a non-dense orbit is $\sigma$-porous.

Let $\{ U_j\}_{j \in \N}$ be  countable basis of the topology on $[0,1]$.
Fix $\alpha$ and let
$A_j = \{\beta : T^n_{\alpha,\beta}(c) \notin U_j \text{ for all } n \geq 1\}$.
We first look at the sets $U_j$ that contain the critical point, so $\beta \in A_j$ means that $c$ is not recurrent
for $T_{\alpha,\beta}$. 
Fix $\beta \in A_j$ and define $Z_n(\beta) \owns \beta$ to be the maximal neighbourhood of $\beta$ on
which $\xi_n$ is monotone.
Now take $n$ arbitrary such that $\xi_n(Z_n(\beta)) \owns c$. Since $T^m_{\alpha,\beta}(c) \notin U_j$ for all $m \leq n$,
$\xi_n(Z_n) \supset U_j$ and by Lemma~\ref{lem:distortion}, $|\xi_n^{-1}(U_j)|/|Z_n| \geq \frac12 |U_j|$.
Since $n$ can be taken arbitrarily large $A_j$ is porous.
Hence the set of $\beta$ such that $c$ is not recurrent $\cup_{c \in U_j} A_j$ is $\sigma$-porous.

So for the rest of the proof we can assume that $c$ is recurrent and we consider the $U_j$s that don't
contain $c$.
We call $n$ a closest approach time if $|\xi_{n+1}(\beta)-\beta| < |\xi_{m+1}(\beta)-\beta|$ for all $m < n$.
Let $n'$ such a time and pick $k$ maximal such that $J_k \owns \xi_n(\beta)$, where $J_k$ are the intervals in
Proposition~\ref{prop:JH}. Once this $k$ is fixed we can 
take the smallest closest approach time $n \leq n'$ such that $\xi_n(\beta) \in J_k$.
Then $\xi_n(Z_n(\beta)) \supset J_k \supset H_k$. Therefore $|\xi_n^{-1}(H_k)|/|\xi_n^{-1}(J_k)| \geq \eta$ and therefore
$|\xi_n^{-1}(H_k \cap T_{\alpha,\beta}^{-N_k}(U_j))| / |\xi_n^{-1}(J_k)| \geq \eta |U_j|$.
Since $n$ can be taken arbitrarily large $A_j$ is porous.
Therefore the set of $\beta$ for which $c$ is recurrent but its orbit avoids some $U_j$ is $\sigma$-porous too.

Recall that $\sigma$-porous sets have zero Lebesgue measure.
Because we are speaking of Borel sets (see Lemma~\ref{lem:Borel}), the result for all $\alpha$ follows from Fubini's Theorem.
\end{proof}

\section{Generalised $\beta$-transformations}\label{sec:beta_trans}

The generalised $\beta$-transformation $G_{\alpha,\beta}:[0,1]\to[0,1]$ is given by
$$
G_{\alpha,\beta}(x) = \beta x + \alpha \pmod 1,
$$
for $\alpha \in [0,1]$ and $\beta > 1$.
Due to the symmetry $G_{\alpha,\beta}(1-x) = 1-G_{1-(\alpha+\beta \pmod 1),\beta}(x)$ it suffices to study
only parameters $\alpha \leq (1+\lfloor\alpha+\beta\rfloor -\beta)/2$.

\begin{figure}[ht]
\begin{center}
\begin{tikzpicture}[scale=0.59]
\draw[-] (0,0) -- (9,0) -- (9,9) -- (0,9) -- (0,0); 
\draw[-, dashed] (0,0) -- (9,9);
\draw[-] (9.1, 6.8) -- (8.9, 6.8);
\draw[-, draw=blue] (3,0) -- (8,9);
\draw[-, draw=blue] (8,0) -- (9,1.8);
\draw[-, draw=blue] (0,3.6) -- (3,9);
\node at (6.75,6.75) {\small $\bullet$}; 
\node at (4,-0.65) {\small $c_1=\frac{1-\alpha}{\beta}$};  
\node at (8.85,-0.65) {\small $c_2=\frac{2-\alpha}{\beta}$};
\node at (10.4,6.8) {\small $p = \frac{1-\alpha}{\beta-1}$};
\node at (-0.4,3.6) {\small $\alpha$}; 
\node at (10.6,1.8) {\small $\alpha+\beta-2$}; 
\end{tikzpicture}
\caption{The generalised $\beta$-transformation with some important points indicated.}
\label{fig:beta-tranf}
\end{center}
\end{figure}
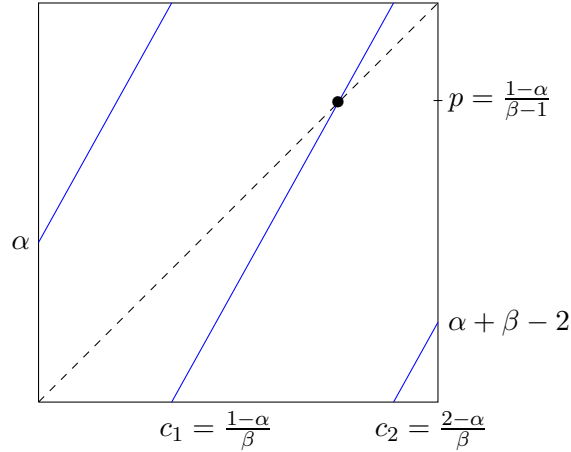 

When we consider this as map on the circle $[0,1] / \! \sim$, then it has a single discontinuity point $c = 0 = 1$. We call the left and right limit of the images  
$G_{\alpha,\beta}^k(c^-)$ and $G_{\alpha,\beta}^k(c^+)$.

\begin{lemma}\label{lem:cykel}
 For each $\beta > 1$ and $\alpha \in [0,1]$, $G_{\alpha,\beta}$ has a unique smallest invariant union $V_{\alpha,\beta}$ of non-trivial intervals.
 Furthermore, for every $\delta > 0$ there is $L$ such that
 $\bigcup_{j=0}^{L-1} G_{\alpha,\beta}^j(M) = V_{\alpha,\beta}$
 for every interval $M$ of length $|M| \geq \delta$.
\end{lemma}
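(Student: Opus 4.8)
The plan is to let $V_{\alpha,\beta}$ be the ``core'' of $G=G_{\alpha,\beta}$ and then deduce the finite-time covering property from ergodicity of an absolutely continuous invariant measure together with a quantitative ``intervals grow'' estimate. First I would record the standard input: $G$ is piecewise linear with constant slope $\beta>1$ on each of its finitely many branches $B_1,\dots,B_m$, hence uniformly expanding, so by the classical theory of such maps (cf.\ \cite{P,FP}) it carries an ergodic invariant probability measure $\mu$, absolutely continuous with density bounded away from $0$ on its support $V:=\operatorname{supp}\mu$, which is a finite union of non-degenerate closed intervals with $\overline{G(V)}=V$. I set $V_{\alpha,\beta}:=V$. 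Its minimality and uniqueness among forward-invariant unions of non-trivial intervals will come out at the end: any such set $W$ contains a non-trivial interval $M$, and once $\bigcup_{j=0}^{L-1}G^{j}(M)\supseteq V_{\alpha,\beta}$ is established, $V_{\alpha,\beta}\subseteq W$; conversely $V_{\alpha,\beta}$ is itself such a set.

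The geometric heart is that the forward images of an interval grow at a definite rate. The branch images $G(B_1),\dots,G(B_m)$ all have length at least some $\rho_0=\rho_0(\alpha,\beta)>0$, so: if an interval $J$ lies in a single branch then $|G(J)|=\beta|J|$; if $J$ contains a whole branch-domain then $G(J)$ contains an interval of length $\ge\rho_0$, and $G(J)=[0,1]$ if that branch is full; and if $J$ straddles exactly one discontinuity then $G(J)$ is a union of two intervals abutting the endpoints of $[0,1]$, which one follows forward until, after boundedly many steps, one of the pieces re-enters one of the first two situations. Iterating this, for every $\delta>0$ there is $N_0=N_0(\delta)$ such that every interval $M$ with $|M|\ge\delta$ has a connected component $P$ of $G^{N_0}(M)$ whose length exceeds the largest complementary interval of $V$; in particular $P$ meets $V$ in a set of positive length, and the same persists at all later times.

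To finish I would invoke ergodicity. Since $\mu$ is $G$-invariant, $\mu(A)\le\mu\bigl(G^{-1}(G(A))\bigr)=\mu(G(A))$ for every Borel $A$; hence, for the component $P$ above, $\mu(P)>0$ and $\cO(P):=\bigcup_{j\ge0}G^{j}(P)$ is forward-invariant, so it is backward-invariant modulo a $\mu$-null set and therefore has full $\mu$-measure, i.e.\ $\cO(P)$ is dense in $V_{\alpha,\beta}$. As $G^{j}(P)$ contains an interval of definite length for all $j$, the increasing union $\bigcup_{j<n}G^{j}(P)$ contains $V_{\alpha,\beta}$ once $n$ is large; and since $\{P:|P|\ge\rho_0\}$ is compact in the Hausdorff metric and this condition is (up to finitely many endpoints) open in $P$, the bound $n$ may be taken uniform, yielding some $L_0=L_0(\alpha,\beta)$. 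Then $L:=N_0(\delta)+L_0$ does the job.

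I expect the main obstacle to be the bookkeeping of the second paragraph together with the uniformity in the third: one must exclude that the images of an interval stay trapped, oscillating at small or intermediate scales — the delicate case is $\beta$ close to $1$, where no branch is full and a piece straddling a discontinuity can momentarily lose a bounded factor — so that the growth is genuinely exponential and the sweep-out time is uniformly bounded. One should also be careful that the reverse inclusion, hence literal equality $\bigcup_{j=0}^{L-1}G^{j}(M)=V_{\alpha,\beta}$, may fail because of exceptional periodic points with degenerate backward orbit; it holds when $M\subseteq V_{\alpha,\beta}$, and the inclusion ``$\supseteq$'' (equivalently, uniform density in $V_{\alpha,\beta}$) is the substantive content used later. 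A shorter but less self-contained route would replace the second paragraph by an appeal to exactness of the acip, classical for $\beta$-transformations and their generalisations.
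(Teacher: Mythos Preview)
Your route is genuinely different from the paper's. You take $V_{\alpha,\beta}=\operatorname{supp}\mu$ for the acip $\mu$ and use ergodicity to get density of $\bigcup_j G^j(P)$ in $V_{\alpha,\beta}$, then argue uniformity by Hausdorff compactness. The paper avoids measure theory entirely: it first observes that any interval grows under iteration until it contains $0$ (so two invariant interval-unions cannot be disjoint, giving uniqueness directly), and then proves the structural fact that $\partial V_{\alpha,\beta}$ lies in a finite forward orbit of the discontinuity point --- this replaces your appeal to ergodicity. For the uniform $L$, the paper picks $r$ with $\beta^r\ge4$ and $\varepsilon>0$ so small that $G^j_{\alpha,\beta}(B(0;\varepsilon))\not\ni0$ for $0<j<r$; then any interval at least doubles in length within $r$ steps until it contains a one-sided $\varepsilon$-neighbourhood of $0$, after which a further fixed number $L_1$ of iterates (determined by the finite orbit producing $\partial V_{\alpha,\beta}$) sweeps out $V_{\alpha,\beta}$. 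This is more elementary and yields an explicit $L(\delta)=L_1-\log_2\delta^r$. Your approach can be made to work, but the compactness step is more delicate than you indicate: the containment $\bigcup_{j<n}G^j(P)\supseteq V_{\alpha,\beta}$ is not literally open in $P$ for the Hausdorff metric, since a small perturbation of $P$ can open a gap in the image. A cleaner repair is to note that any $P$ with $|P|\ge\rho_0$ contains a full monotonicity interval of $G^{n_0}$ for some fixed $n_0$, reducing the problem to finitely many starting intervals. You are right that the literal equality in the statement presupposes $M\subset V_{\alpha,\beta}$; the paper's proof tacitly makes the same restriction.
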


\begin{proof}
 If $J$ is a non-trivial interval, that $|G_{\alpha,\beta}(J)| = \beta |J|$ unless $0$ is an interior point of $J$. Hence $J$ keeps growing under iteration of
 $G_{\alpha,\beta}$ until it contains $0$ in its interior, and in particular, there cannot be two disjoint $G_{\alpha,\beta}$-invariant unions of intervals.
 We denote the smallest such by $V_{\alpha,\beta}$.
%
 
 Now for the second statement, choose $r \in \N$ such that $\beta^r \geq 4$.
 If $\alpha = 0$, then $G_{\alpha,\beta}$ is the normal $\beta$-transformation and there is nothing to prove.
 So take $\alpha \in (0,1)$ and choose $\eps > 0$ such that $G^j_{\alpha,\beta}(B(0;\eps)) \not\owns 0$ for $0 < j < r$.
 This implies that if the interval $M \subset B(0;\eps)$ and $j \geq 1$ are
 such that $G^j_{\alpha,\beta}(M) \owns 0$,
 then $|G^j_{\alpha,\beta}(M)| \geq 2|M|$.
 For general intervals $M \subset V_{\alpha,\beta}$ of length $|M| \leq \eps$,
 we obtain $|G^j_{\alpha,\beta}(M)| \geq 2^{j/r}|M|$
 as long as $G^j_{\alpha,\beta}(M)|$ does not contain a component of $B(0;\eps) \setminus \{ 0 \}$.
 
 If $x$ is a left endpoint of (a component of) $V_{\alpha,\beta}$, then at least one point of $G_{\alpha,\beta}^{-1}(x)$ is equal to $c$ or a left endpoint of $V_{\alpha,\beta}$.
 If the latter is true for all left endpoints, then these left endpoints contain
 a periodic point, say of period $m$, which is expanding.
 Hence $G_{\alpha,\beta}^m(p+\eta) > p+\eta > p$ for all sufficiently small $\eta > 0$,
 and therefore $V_{\alpha,\beta} \setminus \bigcup_{j=0}^{m-1} [G_{\alpha,\beta}^j(p),
 G_{\alpha,\beta}^j(p+\eta))$ is forward invariant, contradicting the minimality of $V_{\alpha,\beta}$. The same argument applies to the right endpoints.
 Therefore $\partial V_{\alpha,\beta}$ is contained in the forward orbit of
 the left and right limit of the discontinuity point:
 there is $L_0 \in \N$ such
that $\partial V_{\alpha,\beta} \subset
\cup_{j=0}^{L_0} G_{\alpha,\beta}^j(\{ \alpha, \alpha+\beta \pmod 1\})$.
Since $G_{\alpha,\beta}$ is expanding, we can find $L_1 \in \N$ such that
 $$
 V_{\alpha,\beta} \subset
 \bigcup_{j=0}^{L_1} G^j_{\alpha,\beta}([0,\eps])
 \quad \text{ and } \quad
 V_{\alpha,\beta} \subset
 \bigcup_{j=0}^{L_1-1} G^j_{\alpha,\beta}([-\eps,0]).
 $$
 Then the claimed property holds for $L(\delta) = L_1 - \log_2 \delta^r$.
%
\end{proof}

In \cite{FP} it is shown that for every $\alpha \in [0,1]$ and $x \in [0,1]$,
the set of $\beta > 1$ such that $x$ is a typical point w.r.t.\ the measure of maximal entropy
(i.e., the absolutely continuous invariant probability measure (acip))
of $G_{\alpha,\beta}$ has full Lebesgue measure.
From this it follows that for Lebesgue-a.e.\ pair $(\alpha,\beta)$, the point $0$
is typical, and in particular has a dense orbit in $V_{\alpha,\beta}$ from Lemma~\ref{lem:cykel}.
This is in many ways stronger than what we will prove, but for our purposes later on,
it is important to first fix $\beta$ (and $x = 0$ but any other $x$ would work equally well)
and then vary $\alpha$.
In this way, we can use particular values of $\beta$, such as Pisot numbers.
Namely, if $\beta>1$ that are Pisot numbers, the techniques to prove this result can also be used, 
to prove that $G_{\alpha,\beta}$ has matching for a full measure set of $\alpha$, cf.\ \cite{BCK17}.
For us, only the typical denseness of the orbit of $0$ is of interest, not the stronger
property of being typical w.r.t.\ its own acip, nor shall we prove that
the set of $\alpha$ with a non-dense orbit is $\sigma$-porous.

For the generalised $\beta$-transformations, $\frac{\partial}{\partial x} G^n_{\alpha,\beta}(x) = \beta^n$ and
for fixed $\beta> 1$, 
$\xi_n(\alpha) := G^n_{\alpha,\beta}(0)$ has derivative $\xi'_n(\alpha) = \frac{\beta^n-1}{\beta-1}$.
Therefore
$$
Q_n(\alpha) := \frac{\xi'_n(\alpha)}{\frac{\partial}{\partial x} G^n_{\alpha,\beta}(0) } =
\frac{(\beta^n-1)}{\beta^n(\beta-1)} \to \frac{1}{\beta-1} \qquad \text{ as }\ n \to \infty,
$$
and we can derive the same (uniform) distortion properties for $\xi_n$ as for the tent-map case.
In particular Lemma~\ref{lem:distortion} holds.

For fixed $\beta > 1$, let $W_{n-1} = W_{n-1}(\alpha)$ be the maximal neighbourhood of $\alpha = G_{\alpha,\beta}(c^+)$
on which $T^{n-1}_{\alpha,\beta}$ is monotone.

\begin{lemma}\label{lem:Mn2}
For $n \geq 1$, there are integers $1 \leq r^+_n, r^-_n < n$ such that 
$G_{\alpha,\beta}^{n-1}(\partial W_{n-1}) = \{ G_{\alpha,\beta}^{n-r^+_n}(c^+), G_{\alpha,\beta}^{n-r^-_n}(c^-)\}$.
\end{lemma}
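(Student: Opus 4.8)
The plan is to mimic the proof of Lemma~\ref{lem:Mn} from the skew tent case, with the only genuinely new feature being that the discontinuity point $c = 0 = 1$ of $G_{\alpha,\beta}$ has a left limit $c^-$ with image $\alpha + \beta \pmod 1$ and a right limit $c^+$ with image $\alpha$, so that the two endpoints of $W_{n-1}$ may ``land'' on orbits of different one-sided limits of $c$. First I would fix $\beta > 1$ and write $W_{n-1} = W_{n-1}(\alpha) = [b_{n-1}, \tilde b_{n-1}]$, the maximal interval containing $\alpha = G_{\alpha,\beta}(c^+)$ on which $G^{n-1}_{\alpha,\beta}$ is monotone. (Implicitly I am tracking, for fixed $\beta$, how $W_{n-1}$ varies as a function of the left endpoint: the relevant thing is that $G^{n-1}_{\alpha,\beta}$ restricted to $W_{n-1}$ has no discontinuity in its interior.)

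Next I would argue that maximality of $W_{n-1}$ forces, at each of its two endpoints, an iterate of that endpoint to hit the discontinuity point $c$ before time $n-1$. Concretely: if $W_{n-1}$ were not maximal at $b_{n-1}$, one could extend it to the left; hence there is a smallest $r^+_n$ with $1 \le r^+_n < n$ such that $G^{r^+_n - 1}_{\alpha,\beta}(b_{n-1})$ equals the discontinuity point $c$, and moreover the branch of $G_{\alpha,\beta}$ applied at that point is the right branch near $c^+$ (this is where the superscript $+$ comes from — the endpoint $b_{n-1}$ is approached from the side on which the orbit sits just to the right of $c$). Then $G^n_{\alpha,\beta}(b_{n-1}) = G^{n - r^+_n}_{\alpha,\beta}(c^+)$. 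Symmetrically, at the other endpoint $\tilde b_{n-1}$ there is $r^-_n$ with $G^n_{\alpha,\beta}(\tilde b_{n-1}) = G^{n-r^-_n}_{\alpha,\beta}(c^-)$ — possibly after swapping the labels of the two endpoints so that the $c^+$-side is the one called $b_{n-1}$. Applying $G^{n-1}_{\alpha,\beta}$ to $\partial W_{n-1} = \{b_{n-1}, \tilde b_{n-1}\}$ then yields exactly $\{G^{n-r^+_n}_{\alpha,\beta}(c^+), G^{n-r^-_n}_{\alpha,\beta}(c^-)\}$, as claimed. One should also check the bounds $1 \le r^\pm_n < n$: the upper bound is immediate since $G^{n-1}_{\alpha,\beta}$ is, by definition of $W_{n-1}$, monotone on the open interval, so a discontinuity can only be hit strictly before the $(n-1)$st step of the boundary orbit, and the lower bound holds because $\alpha = G_{\alpha,\beta}(c^+)$ already is one application of $G$ past $c$.

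I expect the only subtlety — not really an obstacle — to be bookkeeping the two one-sided limits correctly: one must make sure that the endpoint of $W_{n-1}$ whose backward orbit reaches $c$ from the right gets matched with $c^+$ and the other with $c^-$, and that in degenerate situations (e.g.\ $\alpha \in \{0,1\}$, or an endpoint of $W_{n-1}$ coinciding with $c$ itself) the statement is interpreted with the appropriate one-sided convention, exactly as the parenthetical tie-breaking rule in the definition of $Z_n(\beta)$ handles it in the tent case. As in Lemma~\ref{lem:Mn}, one can additionally identify $r^+_n$ and $r^-_n$ in terms of the cutting and co-cutting times of the kneading data of $G_{\alpha,\beta}$, which is the form in which the integers will be used in the analogue of Lemma~\ref{lem:xi}; but for the bare statement the extraction-of-return-time argument above suffices.
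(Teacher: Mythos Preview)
Your proposal is correct and is precisely what the paper intends: it explicitly says ``The proof is analogous to that of Lemma~\ref{lem:Mn} and thus omitted,'' and your write-up is the natural adaptation of that argument, with the only added bookkeeping being the distinction between the one-sided limits $c^+$ and $c^-$. One small slip: from $G_{\alpha,\beta}^{\,r^+_n-1}(b_{n-1})=c$ you should conclude $G_{\alpha,\beta}^{\,n-1}(b_{n-1})=G_{\alpha,\beta}^{\,n-r^+_n}(c^+)$ (not $G_{\alpha,\beta}^{\,n}(b_{n-1})$), which is exactly the form stated in the lemma; the rest of your argument already uses the correct exponent.
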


The proof is analogous to that of Lemma~\ref{lem:Mn} and thus omitted.
The parallel result holds for the maximal neighbourhood of $\alpha + \eta \pmod 1 = G_{\alpha,\beta}(c^-)$,
but we will not need it.

Given $n \geq 4$, let $Z_n(\alpha)$ be the maximal interval containing $\alpha$ such that
$\xi_{n-1}$ is monotone on $Z_n(\alpha)$. 
Since $|\frac{\partial}{\partial x} G^n_{\alpha,\beta}|$ is exponentially large and due to 
Lemma~\ref{lem:distortion}, $Z_n(\alpha)$ is exponentially small.
 The next lemma is the analogue of Lemma~\ref{lem:xi}, proven in the same way.

\begin{lemma}\label{lem:xi2}
Let $\alpha_n$ and $\tilde \alpha_n$ the boundary points of $Z_n(\alpha)$.
Then (after swapping $\alpha_n$ and $\tilde \alpha_n$ if necessary) we have
$\xi_n(\alpha_n) = \xi_{n-r^+_n}(\alpha_n)$ and $\xi_n(\tilde \alpha_n) = \xi_{n-r^-_n}(\tilde \alpha_n)$.
Moreover the quotient
$$
q: \alpha' \mapsto \frac{\xi_n(\alpha') - \xi_{n-r^+_n}(\alpha')}{\xi_{n-r^-_n}(\alpha') - \xi_{n-r^+_n}(\alpha')}
$$
is a monotone map from $Z_n(\alpha)$ onto $[0,1]$.
\end{lemma}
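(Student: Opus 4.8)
The plan is to follow the proof of Lemma~\ref{lem:xi} essentially line for line, the only structural change being that the turning point $c$ of the skew tent map is replaced by the discontinuity point $c = 0 = 1$ of $G_{\alpha,\beta}$, so that every time the parameter orbit of $0$ reaches $c$ one has to declare whether it is continued as $c^+$ or as $c^-$. A pleasant simplification in this setting is that $\xi'_m(\alpha)=\frac{\beta^m-1}{\beta-1}$ is \emph{constant} wherever it is defined, so each $\xi_m$ restricted to a branch is genuinely affine; no distortion estimate is then needed for the monotonicity part of the argument.

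I would proceed in three steps. First, identify the integers: by maximality of $Z_n(\alpha)$ the map $\xi_{n-1}$ fails to be monotone (equivalently, to be continuous, since on each branch it is affine and increasing) at each endpoint $\alpha_n$, $\tilde\alpha_n$ of $Z_n(\alpha)$, and this happens precisely because the parameter orbit of $0$ reaches the discontinuity; so there is a least $1\le r^+_n<n$ with $\xi_{r^+_n}(\alpha_n)=c$, and as $\alpha'\to\alpha_n$ from the interior of $Z_n(\alpha)$ the point $\xi_{r^+_n}(\alpha')$ tends to $c$ from one fixed side, which I declare to be $c^+$; the other endpoint yields (after interchanging $\alpha_n$ and $\tilde\alpha_n$ if need be) the side $c^-$ and an integer $r^-_n$. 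Arguing as in Lemma~\ref{lem:xi}, as $\alpha'$ ranges over the interior of $Z_n(\alpha)$ one has $c\in G^{r^+_n}_{\alpha',\beta}(\partial W_{n-1}(\alpha'))$ and $c\in G^{r^-_n}_{\alpha',\beta}(\partial W_{n-1}(\alpha'))$, so $r^+_n$ and $r^-_n$ depend only on $\alpha$ and coincide with the integers of Lemma~\ref{lem:Mn2}. Second, since $\xi_{r^+_n}(\alpha_n)=c^+$ and $G_{\alpha_n,\beta}(c^+)=\alpha_n=\xi_1(\alpha_n)$, iterating the orbit gives $\xi_{r^+_n+k}(\alpha_n)=\xi_k(\alpha_n)$ for $k\ge 1$, hence $\xi_n(\alpha_n)=\xi_{n-r^+_n}(\alpha_n)$, and likewise $\xi_n(\tilde\alpha_n)=\xi_{n-r^-_n}(\tilde\alpha_n)$. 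Third, on $Z_n(\alpha)$ the three functions $\xi_n,\ \xi_{n-r^+_n},\ \xi_{n-r^-_n}$ are affine and $\xi'_n=\frac{\beta^n-1}{\beta-1}$ strictly exceeds $\xi'_{n-r^\pm_n}$; since $\xi_n$ agrees with $\xi_{n-r^+_n}$ at $\alpha_n$ and with $\xi_{n-r^-_n}$ at $\tilde\alpha_n$, the steeper slope forces $\xi_{n-r^+_n}\le\xi_n\le\xi_{n-r^-_n}$ throughout $Z_n(\alpha)$, with equality only at the respective endpoint, so in particular the denominator $\xi_{n-r^-_n}-\xi_{n-r^+_n}$ is strictly positive on $Z_n(\alpha)$. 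Hence $q$ is a continuous fractional-linear function of $\alpha'$ with $q(\alpha_n)=0$ and $q(\tilde\alpha_n)=1$, and a fractional-linear function with non-vanishing denominator is strictly monotone, so $q$ maps $Z_n(\alpha)$ monotonically onto $[0,1]$.

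The step that needs real care, and the only place the argument differs from the tent-map case (where $T_{\alpha,\beta}$ is continuous), is the one-sided bookkeeping in the first step: one must verify that on the interior of $Z_n(\alpha)$ the function $\xi_n$ itself — not merely $\xi_{n-1}$ — is continuous, i.e.\ that the parameter orbit of $0$ avoids the discontinuity also at time $n$, so that $q$ has no jumps; and one must ensure that the side ($c^+$ versus $c^-$) assigned to each endpoint is consistent with the labelling of $r^+_n$, $r^-_n$ in Lemma~\ref{lem:Mn2}. Once these one-sided conventions are in place the rest is a transcription of the proof of Lemma~\ref{lem:xi}, with $\frac{\partial}{\partial x}G^n_{\alpha,\beta}=\beta^n$ playing the role of the tent-map space derivative and Lemma~\ref{lem:distortion} (which, as already noted, carries over) guaranteeing that $Z_n(\alpha)$ is exponentially small.
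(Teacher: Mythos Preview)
Your proposal is correct and follows the same approach as the paper, which in fact gives no separate proof for this lemma beyond the remark ``proven in the same way'' as Lemma~\ref{lem:xi}. Your write-up is considerably more detailed than the paper's own proof of Lemma~\ref{lem:xi}: you make explicit the identity $\xi_{r^+_n+k}(\alpha_n)=\xi_k(\alpha_n)$, you exploit the pleasant fact that $\xi'_m=\frac{\beta^m-1}{\beta-1}$ is constant so that each branch is genuinely affine and $q$ is fractional-linear (the paper only invokes the slope comparison), and you correctly flag the one genuine subtlety absent in the tent-map case, namely the $c^+/c^-$ bookkeeping needed to match the labelling of $r^+_n$, $r^-_n$ with that of Lemma~\ref{lem:Mn2}.
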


\begin{prop}\label{prop:non_dense}
For any fixed $\beta > 1$ there is $\delta \in (0,1/\beta)$ such that for Lebesgue-a.e.\ $\alpha \in [0,1]$, there is
a sequence $(n_i)$ such that $|\xi_{n_i}(Z_{n_i}(\alpha))| \geq \delta$.
\end{prop}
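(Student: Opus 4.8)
The plan is to prove the complementary statement: for $\delta>0$ chosen small enough, the set $B_\delta:=\{\alpha\in[0,1]:|\xi_n(Z_n(\alpha))|<\delta\text{ for all large }n\}$ is Lebesgue null. Since $B_\delta=\bigcup_{N}B_{\delta,N}$ with $B_{\delta,N}:=\{\alpha:|\xi_n(Z_n(\alpha))|<\delta\text{ for all }n\ge N\}$, it suffices to show $|B_{\delta,N}|=0$ for every $N$. The basic tool is a branch-combinatorics observation. Since $\xi'_{n-1}\equiv\frac{\beta^{n-1}-1}{\beta-1}>0$, a branch $Z_n$ of $\xi_{n-1}$ is just a maximal interval of continuity of $\xi_{n-1}$; on it $\xi_n(\alpha')=\beta\xi_{n-1}(\alpha')+\alpha'\pmod 1$ is affine-mod-$1$, so the branches of $\xi_n$ inside $Z_n$ are the pieces cut out by the integer values of $\beta\xi_{n-1}(\alpha')+\alpha'$. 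Any such piece $Z_{n+1}$ not touching $\partial Z_n$ is \emph{full}: $\xi_n$ maps it onto $[0,1)$, hence so does $\xi_{n+1}$ because $G_{\alpha,\beta}([0,1))=[0,1)$, so $|\xi_{n+1}(Z_{n+1})|=1\ge\delta$. Thus a branch with $|\xi_{n+1}(Z_{n+1})|<\delta$ (call it \emph{bad}) is one of the at most two branches of $\xi_n$ adjacent to $\partial Z_n$, and --- using Lemma~\ref{lem:distortion} and the explicit slopes --- its image $\xi_n(Z_{n+1})$ has length $\le(1+o(1))\delta/\beta$, with the discontinuity point $c$ as one endpoint in the typical case.

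Now fix $\alpha\in B_{\delta,N}$ and, for $n\ge N+1$, let $y_n$ be the endpoint of $\xi_{n-1}(Z_n(\alpha))$ other than $c$. Passing to a bad sub-branch sends $y_n$ either to $G_{\alpha,\beta}(y_n)$ (the ``deep'' sub-branch, abutting $\partial Z_n$) or to one of the two $n$-independent values $G_{\alpha,\beta}(c^+)=\alpha$ and $G_{\alpha,\beta}(c^-)=\alpha+\beta\pmod 1$ (a sub-branch abutting the point where $\xi_{n-1}$ returns to $c$), and $\alpha\in B_{\delta,N}$ forces $\mathrm{dist}(y_n,c)<(1+o(1))\delta/\beta$ for all $n\ge N+1$. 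A point within $\delta/\beta$ of $c$ is mapped by $G_{\alpha,\beta}$ to within $O(\delta)$ of $G_{\alpha,\beta}(c^+)$ or of $G_{\alpha,\beta}(c^-)$; so if $\alpha$ is such that both $\alpha$ and $\alpha+\beta\pmod 1$ lie at distance $\ge 2\delta$ from $c$ --- that is, $\alpha$ avoids a fixed finite union $\mathcal E=\mathcal E(\beta)$ of short intervals --- then $y_{N+2}$ is already at distance $\ge\delta/\beta$ from $c$, a contradiction; hence $B_{\delta,N}\subset\mathcal E$. On each component of $\mathcal E$ only one of $G_{\alpha,\beta}(c^\pm)$ is near $c$, so a bad sequence is pinned to one side of $c$, has only one admissible ``reset'' value, and between resets the iteration $y\mapsto G_{\alpha,\beta}^k(y)$ leaves $B(c;\delta/\beta)$ after a bounded number of steps (bounded in terms of how close $\alpha$ is to the relevant special point). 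Combining this with the length recursion $|Z_{n+1}(\alpha)|/|Z_n(\alpha)|=(1+o(1))\,|\xi_n(Z_{n+1}(\alpha))|/\big(\beta\,|\xi_{n-1}(Z_n(\alpha))|\big)$ coming from Lemma~\ref{lem:distortion}, the bad branches descending from level $N$ form a tree of branching number $\le 2$ whose members at level $n$ have total length tending to $0$, so $|B_{\delta,N}\cap\mathcal E|=0$. Hence $|B_{\delta,N}|=0$, and the proposition holds for any $\delta$ small enough that $\mathcal E$ is as above and the contraction works --- in particular $\delta<1/\beta$.

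The step I expect to be the main obstacle is this last one: turning the qualitative picture above into a genuine contraction, uniformly in $n$. The awkward parameters are those near which \emph{both} $G_{\alpha,\beta}(c^+)$ and $G_{\alpha,\beta}(c^-)$ lie close to $c$ --- i.e.\ $\beta$ close to an integer --- where $c$ is almost a two-sided fixed point, the ``bounded escape time'' statement fails, and instead one needs a Borel-Cantelli / shrinking-target argument: the levels $n$ at which the moving endpoint $y_n$ of $\xi_{n-1}(Z_n(\alpha))$ comes within the relevant scale of $c$ are exponentially rare, and for Lebesgue-a.e.\ $\alpha$ these returns are not frequent enough to keep every $|\xi_n(Z_n(\alpha))|$ below $\delta$. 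Together with the easy case this gives $|B_{\delta,N}|=0$ for all $N$, completing the proof.
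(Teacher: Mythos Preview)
Your proposal has a genuine gap, and in fact you identify it yourself: the final step --- turning the picture into an actual contraction on the tree of ``bad'' branches --- is not carried out, and your suggested Borel--Cantelli/shrinking-target replacement for the exceptional parameters is only named, not executed. But the difficulty is not confined to the exceptional set $\mathcal E$. Your length recursion gives
\[
\frac{|Z_{n+1}(\alpha)|}{|Z_n(\alpha)|}\approx \frac{|\xi_n(Z_{n+1}(\alpha))|}{\beta\,|\xi_{n-1}(Z_n(\alpha))|},
\]
and along a bad chain the numerator is $<\delta$ while the denominator $|\xi_{n-1}(Z_n(\alpha))|$ is \emph{also} $<\delta$; so this ratio is not shown to lie below $1/2$, and a tree with branching number $\le 2$ and uncontrolled edge ratios need not have vanishing total length. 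Your endpoint-tracking is also not quite right as stated: by Lemma~\ref{lem:Mn2} the endpoints of the image interval are $G_{\alpha,\beta}^{n-1-r^+_n}(c^+)$ and $G_{\alpha,\beta}^{n-1-r^-_n}(c^-)$ with $r^\pm_n$ varying with $n$, so neither endpoint is necessarily $c$, and the clean dichotomy ``$y_{n+1}=G_{\alpha,\beta}(y_n)$ or a reset to $\alpha$, $\alpha+\beta\bmod1$'' does not match the actual combinatorics.

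The paper's argument goes a completely different way and sidesteps all of this. Instead of following a single $\alpha$ down the tree and seeking a contradiction, it fixes an arbitrary $\alpha_0$ and $n$ and shows that a \emph{definite proportion} $\eta>0$ of $Z_n(\alpha_0)$ --- with $\eta$ independent of $n$ and $\alpha_0$ --- consists of $\alpha$ for which some later $|\xi_{n'}(Z_{n'}(\alpha))|>\delta$. Concretely: set $M_n=\xi_n(Z_n(\alpha_0))$, split at $c$ into $K_0,K_1$, discard whichever piece has relative length below an exponentially small threshold (so at most one is discarded), and iterate on the survivors; their images grow geometrically until they exceed $\delta$, while the cumulative discarded fraction is at most $1-\prod_m(1-\beta^{-r_m/2})$ with strictly decreasing $r_m$, hence $\eta\ge e^{-1/(\sqrt\beta-1)}>0$. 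This is a pure Lebesgue-density argument (no $\alpha_0$ can be a density point of the bad set), with no endpoint-tracking, no special parameter regimes, and no shrinking-target input.
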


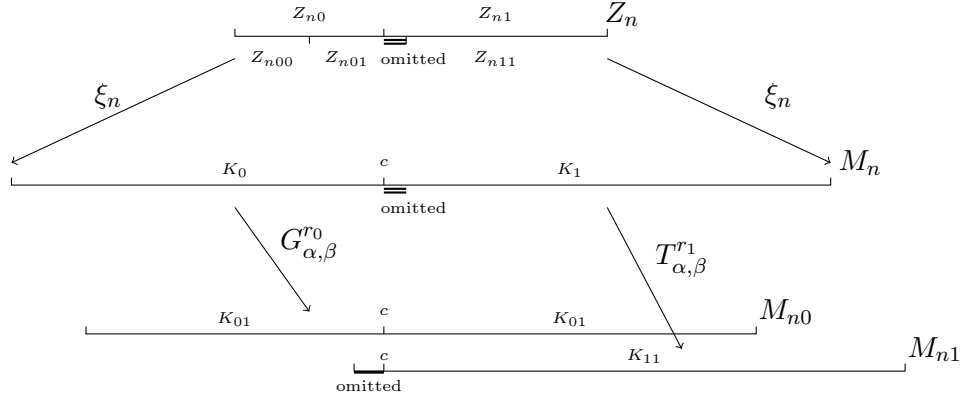
\begin{figure}[ht]
\begin{center}
\begin{tikzpicture}[scale=0.99]
\draw[-] (4,5.1) -- (4,5.0) -- (9,5) -- (9,5.1); \node at (9.2,5.3) {\small $Z_n$};  
\draw[-] (6,5.1) -- (6,5.0); \node at (5,5.3) {\tiny $Z_{n0}$}; \node at (7.5,5.3) {\tiny $Z_{n1}$};  
\draw[-] (5,4.9) -- (5,5.0); \node at (4.5,4.7) {\tiny $Z_{n00}$}; \node at (5.5,4.7) {\tiny $Z_{n01}$};  
\draw[-] (6.3,4.9) -- (6.3,5.0); 
\node at (7.5,4.7) {\tiny $Z_{n11}$}; 
\draw[-, thick] (6.0,4.9) -- (6.3,4.9); \draw[-, thick] (6.0,4.95) -- (6.3,4.95);
\node at (6.4,4.7) {\text{\tiny omitted}}; 
\draw[-, thick] (6.0,2.9) -- (6.3,2.9); \draw[-, thick] (6.0,2.95) -- (6.3,2.95);
\node at (6.4,2.7) {\text{\tiny omitted}}; 
\draw[->] (4,4.7) -- (1,3.3); \draw[->] (9,4.7) -- (12,3.3);  \node at (2.3, 4.2) {\small $\xi_n$};
 \node at (11.3, 4.2) {\small $\xi_n$};
\draw[-] (1,3.1) -- (1,3.0) -- (12,3) -- (12,3.1); \node at (12.4,3.3) {\small $M_n$};  
\draw[-] (6,3.1) -- (6,3.0); \node at (4,3.2) {\tiny $K_{0}$}; \node at (8.5,3.2) {\tiny $K_{1}$}; 
\node at (6, 3.3) {\tiny $c$};
\draw[->] (4,2.7) -- (5,1.3);   \node at (5, 2.2) {\small $G_{\alpha,\beta}^{r_0}$};
\draw[-] (2,1.1) -- (2,1.0) -- (11,1) -- (11,1.1); \node at (11.4,1.3) {\small $M_{n0}$};  
\draw[-] (6,1.1) -- (6,1.0); \node at (4,1.2) {\tiny $K_{01}$}; \node at (8.5,1.2) {\tiny $K_{01}$}; 
\node at (6, 1.3) {\tiny $c$};
\draw[->] (9,2.7) -- (10,0.8);   \node at (10, 2) {\small $T_{\alpha,\beta}^{r_1}$};
\draw[-] (5.6,0.6) -- (5.6,0.5) -- (13,0.5) -- (13,0.6); \node at (13.4,0.8) {\small $M_{n1}$};  
\draw[-] (6,0.6) -- (6,0.5);  
\node at (9.5,0.7) {\tiny $K_{11}$}; 
\node at (6, 0.7) {\tiny $c$};
\draw[-, thick] (5.6,0.48) -- (6,0.48); \draw[-, thick] (5.6,0.49) -- (6,0.49);
\node at (5.8,0.3) {\text{\tiny omitted}};
\end{tikzpicture}
\caption{Intervals used in the proof of Theorem~\ref{thm:non_dense}.}
\label{fig:proof}
\end{center}
\end{figure} 

\begin{proof}
 It suffices to show that no $\alpha \in [0,1]$ can be a density point of the set of parameters
 such that $\limsup_n |\xi_n(Z_n(\alpha))| = 0$.
 Take $\alpha_0$ and $n \in \N$ arbitrary. We will show that for a definite (i.e., independent of $n$ and $\alpha_0$)
 fraction of the set $Z_n(\alpha_0)$, there is $n'$ such that $|\xi_{n'}(Z_{n'}(\alpha)| > \delta$.
 
 First set $M_n = \xi_n(Z_n(\alpha_0))$. 
 Since $\xi'_n = \frac{\beta^n-1}{\beta-1}$ we have $|M_n| = \frac{\beta^n-1}{\beta-1} |Z_n(\alpha_0)| \geq C\beta^n  |Z_n(\alpha_0)|$
 for some $C> 0$.
 Without loss of generality we can assume that $c \in M_n$,
 and denote the two components of $M_n \setminus \{ c \}$ by $K_0$ and $K_1$, and let 
 $Z_{n0}(\alpha_0), Z_{n1}(\alpha_0) \subset Z_n(\alpha_0)$ be the subintervals such that $\xi_n(Z_{ni}(\alpha_0) = K_i$.
 If $|K_i| \leq C\beta^{n/2}|Z_n(\alpha_0)|$, then we omit $Z_{ni}(\alpha_0)$ from $Z_n$.
 Since $|K_0| + |K_1| \geq C\beta^n |Z_n(\alpha_0)|$, at most one of them can be omitted, and the omitted fraction is $\leq \beta^{-n/2}$.
 
 Next let $r_i \in \N$ be the minimal integers such that $\xi_{n+r_i}(Z_{ni}) \owns c$ and
 set $M_{ni} = \xi_{n+r_i}(Z_{ni})$ with components $K_{ij}$, $j = 0,1$, of $M_{ni} \setminus \{ c \}$, and 
 corresponding subintervals $Z_{nij}(\alpha_0) \subset Z_i(\alpha_0)$.
 Similar to the above, $M_{ni} = |K_{i0}| + |K_{i1}| = \beta^{r_i} |K_i|$
 and we omit $Z_{nij}(\alpha_0)$ if $|K_{i0}| < \beta^{-r_i/2} |K_i|$.
 Thus the relative Lebesgue measure of omitted parameters in this round is 
 $\leq \beta^{-r_1/2} :=  \min\{ \beta^{-r_0/2}, \beta^{-r_1/2}\}$.
 
 Continue inductively, until the images $M_{ni_1 \dots i_k}$ are finally longer than $\delta$.
 The non-omitted proportion is $\prod_m (1-\beta^{-r_m/2}) \geq \exp(-\sum_m \beta^{-r_m/2})$.
 Since each next $M_{ni_1 \dots i_m}$ is much larger than the previous
 $M_{ni_1 \dots i_{m-1}}$, the sequence $(r_m)_m$ is strictly decreasing and naturally
 all the (finitely many) factors in the product are $< 1$.
 Hence the proportion of non-omitted parameters is always
 at least $\exp(- \sum_m \beta^{-m/2}) = e^{-1/(\sqrt{\beta}-1)} =: \eta > 0$,
 independently of $\alpha_0$ and $n$.
 
For each non-omitted parameter $\alpha \in Z_n(\alpha_0)$ there is some $n' \leq n + r_1 + \dots + r_k$ such that
$|\xi_{n'}(Z_{n'}(\alpha))| > \delta$, and this concludes the proof.
\end{proof}

In general, we would like to have the stronger statement where $\delta = 1$. 
This is not always possible.
First, of course, the largest branch may not have length $1$.
If $0 < \alpha < \beta + \alpha < 2$, then the largest branch has length $\max\{ 1-\alpha, \alpha+\beta-1\} < 1$.
Recall from Lemma~\ref{lem:cykel} that $V_{\alpha,\beta} = \omega(x)$ for some $x \in [0,1]$.

But even so, the lack of topological mixing can prevent $\delta$ from being $1$,
see e.g.\ Theorems 4.5-4.8 and also Theorem 6.6 of \cite{OPR}.
However, with a single exception $\beta = \sqrt{2}$, $\alpha = (2-\sqrt{2})/2$, every {\bf two-branched} $G_{\alpha,\beta}$ 
is topologically mixing for $\beta \geq \sqrt{2}$.

\begin{theorem}\label{thm:non_dense}
Recall the union of intervals $V_{\alpha,\beta}$ from 
Lemma~\ref{lem:cykel}.
For every $\beta > 1$ and Lebesgue-a.e.\ $\alpha \in [0,1]$, 
the $G_{\alpha,\beta}$-orbit of $c=0$ is dense in 
$V_{\alpha,\beta}$. 
\end{theorem}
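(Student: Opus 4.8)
The plan is to combine Proposition~\ref{prop:non_dense}, which produces for Lebesgue-a.e.\ $\alpha$ a sequence of times $n_i$ with $|\xi_{n_i}(Z_{n_i}(\alpha))| \geq \delta$, with Lemma~\ref{lem:cykel}, which says that any interval of length $\geq \delta$ fills up $V_{\alpha,\beta}$ under a uniformly bounded number $L = L(\delta)$ of iterates. The point is that the monotone images $\xi_{n_i}(Z_{n_i}(\alpha)) = G^{n_i}_{\alpha,\beta}(0) + (\text{linear stretch})$ are long intervals in $V_{\alpha,\beta}$, and $\xi_{n_i}$ realises them by following the orbit of $0$; hence their forward iterates, together with the orbit of $0$, sweep out all of $V_{\alpha,\beta}$.

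The key steps, in order: First, fix $\beta > 1$, let $\delta \in (0,1/\beta)$ be as in Proposition~\ref{prop:non_dense}, and fix $L = L(\delta)$ from Lemma~\ref{lem:cykel}. Let $A$ be the (full-measure) set of $\alpha$ produced by Proposition~\ref{prop:non_dense}, so that $|\xi_{n_i}(Z_{n_i}(\alpha))| \geq \delta$ for infinitely many $n_i$. Second, observe that $M_{n_i} := \xi_{n_i}(Z_{n_i}(\alpha))$ is an interval on which $G^{n_i-1}_{\alpha,\beta}$ is monotone by definition of $Z_{n_i}$, but more to the point $M_{n_i} \subset V_{\alpha,\beta}$: indeed $\xi_{n_i}(\alpha) = G^{n_i}_{\alpha,\beta}(0) \in V_{\alpha,\beta}$ once $n_i$ is large (the orbit of $0$ enters $V_{\alpha,\beta}$ and stays, since $V_{\alpha,\beta}$ is forward invariant and, being the $\omega$-limit of any point, is entered by the orbit of $0$), and $M_{n_i}$ is obtained from $Z_{n_i}(\alpha)$, a tiny interval around $\alpha$, via the monotone map $\xi_{n_i}$ whose image is a single interval through $G^{n_i}_{\alpha,\beta}(0)$; a short argument shows this image lies in $V_{\alpha,\beta}$ (or, if one prefers, intersect with $V_{\alpha,\beta}$, which costs only a bounded factor). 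Third, apply Lemma~\ref{lem:cykel} to $M = M_{n_i}$: $\bigcup_{j=0}^{L-1} G^j_{\alpha,\beta}(M_{n_i}) = V_{\alpha,\beta}$. Fourth, and this is the crucial translation back to the orbit of $0$: for any target open set $U \subset V_{\alpha,\beta}$, there is $j < L$ with $G^j_{\alpha,\beta}(M_{n_i}) \supset U$; pulling back through the monotone branch, $\xi_{n_i+j}$ maps a subinterval of $Z_{n_i}(\alpha)$ onto (a set containing) $U$. But $\xi_{n_i+j}$ is a single-valued function of $\alpha$ at the point $\alpha$ itself, with value $G^{n_i+j}_{\alpha,\beta}(0)$; what we actually need is that $U$ is hit by the orbit of $0$. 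This follows because $\xi_{n_i+j}$ restricted to the relevant branch over $Z_{n_i}(\alpha)$ has image containing $U$, and $\alpha$ lies in $Z_{n_i}(\alpha)$, so $G^{n_i+j}_{\alpha,\beta}(0) = \xi_{n_i+j}(\alpha)$; since this holds for infinitely many $n_i$ and the branch images containing $U$ occur with $\alpha$ in their interior for all large $n_i$, density follows. Finally, intersect over a countable basis $\{U_j\}$ of the topology of $[0,1]$ (using $U_j \cap V_{\alpha,\beta} \neq \emptyset$ as the relevant ones) and over the full-measure sets $A$; Lemma~\ref{lem:Borel} (or its obvious analogue for $G_{\alpha,\beta}$) guarantees measurability, and the conclusion holds for a.e.\ $\alpha$.

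The main obstacle is the fourth step: making rigorous the passage from ``the long image interval $M_{n_i}$ sweeps out $V_{\alpha,\beta}$ in $L$ steps'' to ``the orbit of $0$ is dense''. The subtlety is that $\xi_{n_i+j}(\alpha') = G^{n_i+j}_{\alpha',\beta}(0)$ depends on $\alpha'$ in a way that mixes the dynamical image $G^{n_i+j}_{\alpha,\beta}(0)$ (at the single parameter $\alpha' = \alpha$) with the spatial stretching of the interval $Z_{n_i}(\alpha)$; one must check that it is the \emph{spatial} extent of $M_{n_i} = \xi_{n_i}(Z_{n_i}(\alpha))$, sitting inside $V_{\alpha,\beta}$ as a genuine subinterval containing $G^{n_i}_{\alpha,\beta}(0)$, whose $L$-fold iterate covers $U$, and that $G^{n_i}_{\alpha,\beta}(0)$ itself is the basepoint. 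Concretely: write $M_{n_i} \ni G^{n_i}_{\alpha,\beta}(0)$, apply $G^j_{\alpha,\beta}$ spatially to get an interval $\ni G^{n_i+j}_{\alpha,\beta}(0)$, and note that since the union over $j<L$ of these covers $V_{\alpha,\beta} \supset U$, one of the points $G^{n_i+j}_{\alpha,\beta}(0)$, $j < L$, lies within $|U|$ of any prescribed point of $U$ — wait, that is not quite it either; rather, $U \subset G^j_{\alpha,\beta}(M_{n_i})$ means $G^{-j}(U) \cap M_{n_i} \neq \emptyset$, and since $|M_{n_i}| \geq \delta$ is not shrinking while we only need \emph{some} orbit point in $U$, the cleanest route is: $M_{n_i}$ has length $\geq \delta$, so by Lemma~\ref{lem:cykel} the orbit segment $\{G^{n_i}_{\alpha,\beta}(0), \dots, G^{n_i + L - 1}_{\alpha,\beta}(0)\}$ — no. The honest fix is to track the basepoint through the covering: refine Lemma~\ref{lem:cykel}'s proof (or restate it) to give that for any $U$ there is $j < L$ and a \emph{subinterval} $M' \subset M_{n_i}$ with $G^j_{\alpha,\beta}(M') = U$ and, crucially, that we may instead ask only that $G^j_{\alpha,\beta}$ maps \emph{some} point of $M_{n_i}$ into $U$; then since distortion (Lemma~\ref{lem:distortion}) controls $\xi_{n_i}$ on $Z_{n_i}(\alpha)$ with the parameter $\alpha$ in the interior, and $G^{n_i}_{\alpha,\beta}(0) = \xi_{n_i}(\alpha) \in M_{n_i}$ with $M_{n_i}$ not too lopsided around this point (by the distortion bound), the orbit point $G^{n_i}_{\alpha,\beta}(0)$ is within a bounded multiple of $|M_{n_i}|$ of every point of $M_{n_i}$; iterating $j < L$ times multiplies this by at most $\beta^L$, so $G^{n_i+j}_{\alpha,\beta}(0)$ is within $\beta^L \cdot (\text{const}) \cdot |V_{\alpha,\beta}|$ of everything — which is useless unless $\delta$ and the distortion give us that $M_{n_i}$ genuinely contains a fixed-size neighbourhood of its basepoint. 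I expect the actual argument in the paper sidesteps this by noting $|M_{n_i}| \geq \delta$ with $\delta$ independent of $i$, so $M_{n_i}$ itself already contains, around $G^{n_i}_{\alpha,\beta}(0)$, an interval of length $\geq \delta'$ on at least one side (up to bounded distortion), and then $\bigcup_{j<L} G^j_{\alpha,\beta}$ of \emph{that} one-sided piece still covers $V_{\alpha,\beta}$ (rerun Lemma~\ref{lem:cykel} with $\delta'$), now with basepoint an endpoint, i.e.\ an honest orbit point of $0$; so every $U$ is hit by $\{G^m_{\alpha,\beta}(0) : n_i \le m < n_i + L(\delta')\}$ for all $i$, giving density. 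This endpoint-tracking is the one place requiring care; everything else is bookkeeping over the countable basis and Fubini/measurability via Lemma~\ref{lem:Borel}.
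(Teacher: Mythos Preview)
Your proposal has a genuine gap precisely where you sense trouble: the fourth step cannot be completed along the lines you sketch. The property furnished by Proposition~\ref{prop:non_dense} for a fixed $\alpha$ --- infinitely many $n_i$ with $|\xi_{n_i}(Z_{n_i}(\alpha))| \geq \delta$ --- does \emph{not} imply that the single orbit $\{G^m_{\alpha,\beta}(0)\}_m$ is dense. The interval $M_{n_i} = \xi_{n_i}(Z_{n_i}(\alpha))$ is traced out by $\xi_{n_i}(\alpha')$ as the \emph{parameter} $\alpha'$ ranges over $Z_{n_i}(\alpha)$; the orbit point $G^{n_i}_{\alpha,\beta}(0) = \xi_{n_i}(\alpha)$ is just one point of it. That $\bigcup_{j<L} G^j_{\alpha,\beta}(M_{n_i}) = V_{\alpha,\beta}$ says nothing about where the \emph{single} point $G^{n_i}_{\alpha,\beta}(0)$ goes in $L$ further steps --- its forward $L$-orbit is $L$ points, not an interval. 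Your ``endpoint-tracking'' fix repeats the error: even if $M'$ has $G^{n_i}_{\alpha,\beta}(0)$ as an endpoint and $\bigcup_{j<L} G^j(M') = V_{\alpha,\beta}$, it does not follow that the finite set $\{G^{n_i+j}_{\alpha,\beta}(0) : j<L\}$ meets an arbitrary $U$.

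The paper's argument is structurally different: it is a Lebesgue density-point argument in \emph{parameter} space, not a pointwise argument at a single $\alpha$. Fix a basic open $U_k$ and an arbitrary $\alpha_0$. From the \emph{proof} (not just the statement) of Proposition~\ref{prop:non_dense}, an $\eta$-proportion of every $Z_n(\alpha_0)$ consists of parameters lying in sub-cylinders $Z_{n'} \subset Z_n(\alpha_0)$ with $|\xi_{n'}(Z_{n'})| > \delta$. On each such $Z_{n'}$, Lemma~\ref{lem:cykel} gives that a spatial proportion $\geq L^{-1}\beta^{-L}|U_k|$ of $M_{n'} = \xi_{n'}(Z_{n'})$ lies in $\bigcup_{j<L} G^{-j}(U_k)$; pulling back through the (bounded-distortion) map $\xi_{n'}$, the same proportion of $Z_{n'}$ consists of parameters $\alpha'$ with $\xi_{n'+j}(\alpha') = G^{n'+j}_{\alpha',\beta}(0) \in U_k$ for some $j<L$. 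Hence a fixed proportion $\eta L^{-1}\beta^{-L}|U_k|$ of every $Z_n(\alpha_0)$ consists of parameters whose $0$-orbit visits $U_k$, so $\alpha_0$ is not a density point of the bad set $\{\alpha : \text{orbit of } 0 \text{ avoids } U_k\}$; that set therefore has measure zero, and one intersects over $k$. The step you are missing is exactly this transfer from a spatial proportion in $M_{n'}$ to a \emph{parameter} proportion in $Z_{n'}$ via $\xi_{n'}$, followed by the density-point conclusion.
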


\begin{proof}
Fix $\beta > 1$ and take $\delta > 0$ as in Proposition~\ref{prop:non_dense}. 
Lemma~\ref{lem:cykel} stated that
there is $L = L(\delta) \in \N$ so that $\bigcup_{j=0}^{L-1} G_{\alpha,\beta}^j(M) = V_{\alpha,\beta}$
for every interval with diameter $|M| \geq \delta$.
Let $\{ U_k \}_k$ be a countable basis of the topology of $V_{\alpha,\beta}$.
Then $\mbox{Leb}(\bigcup_{j=0}^{L-1} G_{\alpha,\beta}^{-j}(U_k) \cap M)
\geq L^{-1} \beta^{-L} |U_k|$ for each $k$.

 By Proposition~\ref{prop:non_dense}, each neighbourhood $Z_n(\alpha_0)$ contains an $\eta$-proportion of points $\alpha$
 such that $|Z_{n'}(\alpha)| > \delta$, and therefore also for an $\eta L^{-1} \beta^{-L}  |U_k|$ proportion of 
 points $\alpha \in Z_n(\alpha_0)$, the $G_{\alpha,\beta}$-orbit of $0$ will visit $U_k$.
 Since $\alpha_0$ is not a density point of the complement, it follows that for Lebesgue full measure
 set $A_k$ of $\alpha \in [0,1]$, the $G_{\alpha,\beta}$-orbit of $0$ will visit $U_k$.
 Now take $A = \cap_k A_k$. Then $A$ has full Lebesgue measure, and the $G_{\alpha,\beta}$-orbit of $0$
 is dense in $V_{\alpha,\beta}$. 
\end{proof}

\subsection{Matching}
In this section we show how the previous result can help in proving
prevalent matching for generalised $\beta$-transformations with Pisot slopes.
We say that $G_{\alpha,\beta}$ has {\em matching} if there is an iterate $\kappa \geq 1$, called {\em matching index}
such that $G^{\kappa}_{\alpha,\beta}(0) = G^{\kappa}_{\alpha,\beta}(1)$, 
or, when viewed on the circle with discontinuity $c = 0$,
$G^{\kappa}_{\alpha,\beta}(c^-) = G^{\kappa}_{\alpha,\beta}(c^+)$.
It was shown in \cite{BCK17} that if $\beta$ is a quadratic Pisot unit, then there is matching
for Lebesgue almost every $\alpha \in [0,1]$. In fact, matching occurs on an open and dense set ({\em prevalent matching}) and
the set of parameters where matching fails has Hausdorff dimension $< 1$.

It is expected that matching is prevalent for every Pisot slope $\beta$.
Recall that $\beta > 1$ is a degree $N$ {\em Pisot unit} if it is the leading root of an irreducible polynomial
\begin{equation}\label{eq:Pisot}
 P(\beta) = \beta^N - \sum_{i=0}^{N-1} a_i \beta^i, \qquad a_i \in \Z,
\end{equation}
and all the algebraic conjugates of $\beta$ lie strictly inside the unit disk. 

The Pisot numbers we are trying to tackle are the multinacci numbers, i.e., the leading roots of
the polynomials
\begin{equation}\label{eq:Pisot_beta}
P(\beta) = \beta^N - (\beta^{N-1} + \beta^{N-2} + \dots + \beta + 1) = \beta^N - \frac{\beta^N-1}{\beta-1}.
\end{equation}
Thus $\beta < 2$ (in fact, for $N=2$, $\beta$ is the golden mean, and for $N=3$, $\beta = 1.8392867552\dots$ is the tribonacci number) and $\beta \nearrow 2$ as $N \to\infty$.
It can be easily computed that
\begin{equation}\label{eq:Pisot_beta2}
1 = \beta^{-1} + \beta^{-2} + \dots + \beta^{-N} \qquad \text{ and } \qquad 2-\beta = \beta^{-N}.
\end{equation}
In \cite{BCK17} it was shown that for $N = 3$, i.e., the tribonacci number, the non-matching 
set has Hausdorff dimension $< 1$. For all $N \geq 4$, prevalence of matching is still an open question.

Let $p = \frac{\alpha-1}{\beta-1}$ be the fixed point.
Due to symmetry, we can assume that $T(0) \leq p$, i.e., $\alpha \leq \frac{1-\alpha}{\beta-1}$ 
or equivalently $\alpha \leq \beta^{-1}$.
If $\alpha  \leq 2-\beta$, then $G_{\alpha,\beta}$ has only two branches on $[0,1]$.
In this case,  for $1 \leq n < N$, we have
$$
G_{\alpha,\beta}^n(0) = \alpha \frac{\beta^n-1}{\beta-1} \leq 
\alpha \frac{\beta^n-1}{\beta-1} + \beta^n -  \frac{\beta^n-1}{\beta-1} = G_{\alpha,\beta}^n(1),
$$
and therefore (using from \eqref{eq:Pisot_beta} that $\beta^N = \frac{\beta^N-1}{\beta-1}$) there is matching at step $N$. 

From now on, take $\alpha  > 2 - \beta$ and define
$$
d(n) := |G_{\alpha,\beta}^n(1)- G_{\alpha,\beta}^n(0)|= \sum_{i=1}^N e_i(n) \, \beta^{-i}, \qquad 
\quad e_i(n) \in \{ 0 , 1\},
$$
so
$d(1) = |\alpha+\beta-2-\alpha| = 2-\beta = \beta^{-N}$ by \eqref{eq:Pisot_beta2}.
The iteration of $d(n)$ is given by
$$
d(n+1) = \begin{cases}
            \sum_{i=1}^{N-1} e_{i+1}(n) \beta^{-i} & \text{ if this is positive};\\
            \beta^{-N} + \sum_{i=1}^{N-1} (1 - e_{i+1}(n)) \beta^{-i} & \text{ otherwise.}
         \end{cases}
$$
That is: we either shift the string $e = (e_1, \dots, e_N)$ or shift it and swap all $0$s to $1$s and vice versa.
In particular, if $e(n) = e_100\dots 0$, then $e(n+1) = 000\dots 0$ and $d(n+1)=0$, so we have matching.
This is easy to see by noting that $G_{\alpha,\beta}^n(0)$ and $G_{\alpha,\beta}^n(1)$ lie $|e_1|/\beta$ apart so their images are the same.
Therefore, if
\begin{equation}\label{eq:no_change}
G_{\alpha,\beta}^{n+i}(0) - G_{\alpha,\beta}^{n+i}(1) \text{ doesn't change sign for } 0 \leq i < N, 
\end{equation}
there is matching for some $i < N$.
Converse, if $G_{\alpha,\beta}^{n}(0) - G_{\alpha,\beta}^{n}(1)$ has just switched sign, so $e_N(n) = 1$, then matching after $N$ step 
implies \eqref{eq:no_change}.

It suffices to find an interval $U$ such that if $G_{\alpha,\beta}^m(0) \in U$,
then \eqref{eq:no_change} holds for some $n \geq m$. 
Indeed, if such $U$ exists, then 
Theorem~\ref{thm:non_dense}
implies that for a.e.\ $\alpha$, there is indeed
$m$ such that $G_{\alpha,\beta}^m(0) \in U$.
Taking this viewpoint,
we give a simpler proof of prevalence of matching than provided by \cite[Theorem 5.1]{BCK17}.

\begin{prop}\label{prop:multinacci}
The generalised $\beta$-transformation $G_{\alpha,\beta}$ with $\beta$ the tribonacci number has matching for Lebesgue-a.e.\ $\alpha \in [0,1]$. 
\end{prop}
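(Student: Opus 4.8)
The plan is to invoke the criterion set up just above the proposition: it suffices to exhibit, for $\beta$ the tribonacci number and each relevant $\alpha$, an interval $U = U(\alpha)$ meeting $V_{\alpha,\beta}$ in a nonempty open set such that $G^m_{\alpha,\beta}(0) \in U$ for some $m \geq 1$ forces \eqref{eq:no_change} to hold for some $n \geq m$, hence matching. Once $U$ is found, Theorem~\ref{thm:non_dense} guarantees that for Lebesgue-a.e.\ $\alpha$ the $G_{\alpha,\beta}$-orbit of $0$ is dense in $V_{\alpha,\beta}$, so it enters $U$, and we are done. The parameters $\alpha \leq 2-\beta$ have already been handled (matching at step $N=3$), so we may assume $\alpha > 2-\beta$, whence $d(1) = \beta^{-3}$ and $G_{\alpha,\beta}$ has, besides the discontinuity $c=0$, exactly two break points $c_1 = (1-\alpha)/\beta$ and $c_2 = (2-\alpha)/\beta$, with $c_2 - c_1 = \beta^{-1}$.

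Next I would organise the bookkeeping of $d(n) = |G^n(0)-G^n(1)| = 0.e_1(n)e_2(n)e_3(n)$ in base $\beta$, $e_i(n)\in\{0,1\}$, into a finite state machine on $\{0,1\}^3$. As explained before the proposition, one step is either a \emph{shift} $e \mapsto (e_2,e_3,0)$, which occurs precisely when the pair $G^n(0),G^n(1)$ is separated by no break point (when $e_1=0$) or by exactly one break point (when $e_1=1$), and preserves the sign of $G^n(0)-G^n(1)$; or a \emph{shift--complement} $e \mapsto (1-e_2,1-e_3,1)$, which occurs when one extra break point separates them, and flips the sign. Matching at step $n+1$ happens exactly when $e(n) = (1,0,0)$, and $(1,0,0)$ is reached only by a shift from $(0,1,0)$ or from $(1,1,0)$. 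Since the last digit of any shift--complement image is $1$, and two consecutive shifts out of a string with last digit $1$ reach $(1,0,0)$, a parameter fails to match only if at every step that would otherwise hit $(1,0,0)$ the pair straddles the extra break point; at the relevant strings $d(n)$ is one of $\beta^{-3},\beta^{-2},\beta^{-1}+\beta^{-3},\beta^{-1}+\beta^{-2}$, so this pins $G^n(0)$ into finitely many intervals of length $O(\beta^{-2})$ abutting $c_1$ or $c_2$ at those times.

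To produce $U$ I would then combine this with the nearly deterministic behaviour of the orbit after entry: choose $U$ to be a component of $[0,1]$ minus a sufficiently small neighbourhood of $\{c_1,c_2\}$ together with their first few $G_{\alpha,\beta}$-preimages, and small enough (using the sign of $G^n(0)-G^n(1)$, which is carried along in the automaton) that if $G^m(0)\in U$ at a time $m$ whose string has last digit $1$, then no break point can separate the pair at time $m$ or at time $m+1$. Two shifts then occur, reaching $(1,0,0)$, i.e.\ \eqref{eq:no_change}. Only finitely many preimages are removed, so $U$ has positive length; and because density of the orbit of $0$ prevents that orbit from staying forever in the short intervals near $c_1,c_2$ that non-matching would require at the critical strings (in particular it cannot loop indefinitely at the string $(1,0,1)$), a non-matching parameter would have to realise a visit to $U$ at such a time — contradiction.

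The main obstacle is the last step: the actual construction of $U$ and the positive-length estimate. The trouble is that $d(n)$, hence how much of $[0,1]$ the pair $(G^n(0),G^n(1))$ occupies, is not a function of $G^n(0)$: for the strings with $e_1=1$ the pair has diameter $\beta^{-1}+\beta^{-2} = \beta-1$, larger than any single branch of $G_{\alpha,\beta}$, so no placement of $G^n(0)$ away from the break points can by itself rule out a sign change, and one must genuinely exploit the explicit positions of $c_1,c_2$ for the tribonacci $\beta$, together with the sign bookkeeping, to force two consecutive shifts. It is exactly this geometric input that fails to survive the passage to $N \geq 4$, which is why the general multinacci case stays open.
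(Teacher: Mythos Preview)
Your outline is sound and you correctly identify the criterion to verify, but the gap you name in the last two paragraphs is real and your proposed $U$ does not close it. The difficulty is exactly what you flag: $U$ must be a \emph{fixed} open set, yet your construction (a component of the complement of small neighbourhoods of $c_1,c_2$ and finitely many preimages) only forces two consecutive shifts \emph{provided} the automaton happens to be in a last-digit-$1$ state at the moment of entry. Density of $\orb(0)$ in $V_{\alpha,\beta}$ guarantees visits to $U$, but says nothing about the automaton state at those visits, because that state is a function of the entire past of the pair $(G^n(0),G^n(1))$, not of the current position $G^n(0)$. So the contradiction in your final sentence (``a non-matching parameter would have to realise a visit to $U$ at such a time'') is not established: you have not ruled out that every visit of $\orb(0)$ to $U$ occurs at a last-digit-$0$ state that then flips before two shifts can occur.

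The missing idea is to anchor $U$ at the fixed point $p = (1-\alpha)/(\beta-1)$: take $U = (p-\eps,p)$. Once $G^n_{\alpha,\beta}(0) \in U$, the iterates $G^{n+k}_{\alpha,\beta}(0)$ stay within an arbitrarily small neighbourhood of $p$ for as many steps $k$ as you like (by shrinking $\eps$), because $p$ is fixed and the branch through $p$ is expanding but affine. This gives control over the position of the $0$-orbit for \emph{all} relevant future times, not just at entry, and so dissolves the dependence on the automaton state: whatever $d(n)$ is (there are only seven nonzero strings in $\{0,1\}^3$), you can run your automaton with $G^{n+k}(0)$ pinned at $p$ and track $G^{n+k}(1)$ explicitly relative to $c_1,c_2$. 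The paper does exactly this case analysis; in all but one sub-case matching follows within a uniformly bounded number of steps, and in the remaining sub-case ($d(n)=\beta^{-2}+\beta^{-3}$ for a certain range of $\alpha$) the pair is $3$-periodic when $G^n(0)=p$ exactly, but for $G^n(0)$ slightly below $p$ a further controlled number of iterates swaps the roles of $0$ and $1$ and then matching follows. This is the ``geometric input'' you were looking for, and it is precisely the use of the fixed point that makes the bounded case analysis possible.
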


\begin{proof}
As mentioned before, there is matching if $T$ has only two branches, so
we assume $\alpha > 2-\beta$.
If $\alpha$ is still so small that the fixed point $p = \frac{1-\alpha}{\beta-1} > 1-\beta^{-N} = \beta-1$,
and if $G_{\alpha,\beta}^n(0)$ is very close to $p$, then also \eqref{eq:no_change} holds for the next
$N$ steps, because there is no place in $[p,1]$ for $G_{\alpha,\beta}^{n+i}(1)$.
Combined with Theorem~\ref{thm:non_dense}, this means that we have almost sure matching 
for $\alpha \in [0, \beta(2-\beta)]$.

So from now on we assume that $\beta^{-1} > \alpha > \beta(2-\beta) = \beta^{1-N}$,
where the equality follows by \eqref{eq:Pisot_beta2}.
These assumptions give (recalling that $c_1 = \frac{1-\alpha}{\beta}$)
 \begin{eqnarray}\label{eq:pc1}
 \frac{1}{\beta^2} <  p-c_1 &=& \frac{1-\alpha}{\beta(\beta-1)} 
 < \frac{1-\beta^{1-N}}{\beta(\beta-1)}= \frac{\beta-1}{\beta},
   \end{eqnarray}
 where the last equality follows
  since $1-\beta^{1-N} = \beta^{1-N}(\beta-1)(\beta^{N-2} + \beta^{N-3}+ \dots + 1)
  = (\beta-1)(\beta^{-1} + \dots + \beta^{-N} - \beta^{-N})
  = (\beta-1) (1-\beta^{-N}) = (\beta-1)^2$ by \eqref{eq:Pisot_beta2}.
Therefore  
   \begin{equation}\label{eq:pc2}
    \frac{1}{\beta}-\frac{1}{\beta^2} > 
  \frac{1}{\beta} - (p-c_1) = c_2-p
  > \frac{2-\beta}{\beta} = \frac{1}{\beta^{N+1}}
   \end{equation}
by \eqref{eq:Pisot_beta2}.
Also note that 
$$
\hat p := p-\frac{1}{\beta} < c_1 = \frac{1-\alpha}{\beta} < p = \frac{1-\alpha}{\beta-1} < c_2 = \frac{2-\alpha}{\beta} < 1,
$$
so $p-c_1 < \frac1\beta$. 

 Assume that $G_{\alpha,\beta}^n(0) = p$ (the case $G_{\alpha,\beta}^n(1) = p$ goes likewise).
 Since $G_{\alpha,\beta}(0) =  \beta-1 < \frac{1-\alpha}{\beta-1} = p_1$,
 taking a finite number of iterates if necessary, we can assume that $G_{\alpha,\beta}^n(1) < G_{\alpha,\beta}^n(0)$.
 \begin{enumerate}
 \item If $d(n) = \frac{1}{\beta}$, then there is matching at the next iterate.
 \item  If $d(n) > \frac1\beta$, and therefore $G_{\alpha,\beta}^n(1) < \hat p$, then $G_{\alpha,\beta}^n(1) < G_{\alpha,\beta}^{n+1}(1) \leq G_{\alpha,\beta}^n(0)$ and $d(n+1) = \beta d(n) - 1$. 
 \item If $d(n) \leq \frac1{\beta^2}$ and therefore $d(n)\leq p-c_1$, then $c_1 < G_{\alpha,\beta}^{n}(1) \leq G_{\alpha,\beta}^n(0)$, $d(n+1) = \beta d(n)$ and $G_{\alpha,\beta}^{n+1}(1) < G_{\alpha,\beta}^{n}(1)$.
 \item The remaining case is $\frac1{\beta^2} < d(n) < \frac1\beta$. Here we have to make further case distinctions
 on $\beta$. Since $\beta$ is the tribonacci number, 
 $d(n) = \frac{1}{\beta^2} + \frac{1}{\beta^3}$ is the only possibility.
 If $p-c_1 > d(n) =  \frac{1}{\beta^2} + \frac{1}{\beta^3}$, then this case goes as part 3., and we find $G_{\alpha,\beta}^{n+1}(1) = G_{\alpha,\beta}^{n+1}(0)- \frac{1}{\beta} - \frac{1}{\beta^2}$ and by part 1.\ above, we have matching in two iterates.
 So assume that $p-c_1 \leq \frac{1}{\beta^2} + \frac{1}{\beta^3}$.
 We have
 $$
 \frac{1}{\beta^3} < 
 1-\frac{1}{\beta^2} - \frac{1}{\beta^3} \leq  c_2-p = \frac{1}{\beta} - (p_1-c_1) = \frac{1}{\beta} (1-\frac{1-\alpha}{\beta-1}) =
 \frac{1}{\beta}\frac{\alpha+\beta-2}{\beta-1}.
 $$
 We distinguish two cases:
 \begin{itemize}
  \item[(i)] $c_2-p_1 > \frac{1}{\beta^2}$ which happens when $\alpha > \frac{3\beta-\beta^2-1}{\beta}$.
  Then
  \begin{eqnarray*}
  G_{\alpha,\beta}^{n+1}(1) &=& G_{\alpha,\beta}^{n+1}(0)+\frac{1}{\beta^3} < c_2 \\
   G_{\alpha,\beta}^{n+2}(1) &=& G_{\alpha,\beta}^{n+2}(0)+\frac{1}{\beta^2} < c_2 \\
    G_{\alpha,\beta}^{n+3}(1) &=& G_{\alpha,\beta}^{n+3}(0)+\frac{1}{\beta},
 \end{eqnarray*}
 and matching occurs at the next iterate.
\item[(ii)]
$\frac{1}{\beta^3} \leq c_2-p \leq \frac{1}{\beta^2}$   which happens when 
     $\frac{\beta^2-2}{\beta^2} \leq \alpha \leq \frac{3\beta-\beta^2-1}{\beta}$.
In this case,
 \begin{eqnarray*}
  G_{\alpha,\beta}^{n+1}(1) &=& G_{\alpha,\beta}^{n+1}(0)+\frac{1}{\beta^3} < c_2 \\
   G_{\alpha,\beta}^{n+2}(1) &=& G_{\alpha,\beta}^{n+2}(0)+\frac{1}{\beta^2} > c_2 \\
    G_{\alpha,\beta}^{n+3}(1) &=& G_{\alpha,\beta}^{n+3}(0)-\frac{1}{\beta^2}-\frac{1}{\beta^3} < c_1.
 \end{eqnarray*}
Hence, if $G_{\alpha,\beta}^n(0) = p$ exactly, then $(G_{\alpha,\beta}^n+k(0), G_{\alpha,\beta}^{n+k}(1))_{k \geq 0}$ is a sequence of period $3$, and there is no matching.
However, for every $k \geq 1$, there is a small interval $V \subset p-\eps,p)$ to the left of $p$ such that $G_{\alpha,\beta}^{3k+1}(V) = V' := (p-\frac{1}{\beta^3}, p-\frac{1}{\beta^3})$.
This means that if $G_{\alpha,\beta}^n(0) \in V$,
the $G_{\alpha,\beta}^{n+3k+1}(0) \in (p-\eps,p)$,
$G_{\alpha,\beta}^{n+3k+1}(1) \in (p-\eps,p)$, so after $3k+1$ iterates, the roles of $0$ and $1$ have swapped.
By part 3.\ above, we have matching in three steps.
\end{itemize}
\end{enumerate}

\begin{figure}[ht]
\begin{center}
\begin{tikzpicture}[scale=0.9]
\node at (2.3,5) {\small \fbox{+011}}; \draw[->, draw=blue] (3.1,5) -- (4.1,5);
\node at (4.8,5) {\small \fbox{-001}}; \draw[->, draw=blue] (5.6,5) -- (6.6,5); 
\node at (3.5,5.3) {\tiny part 4(i)}; \node at (6.1,5.3) {\tiny part 4(i)}; \node at (8.5,5.3) {\tiny part 4(i)}; 
\node at (7.3,5) {\small \fbox{-010}}; \draw[->, draw=blue] (8,5) -- (9,5);
\node at (9.8,5) {\small \fbox{-100}}; \draw[->, draw=blue] (9.8,4.5) -- (10.3,1.5);
\node at (1.3,3.3) {\tiny part 4(ii)};  \node at (1.3,2.8) {\tiny $3k+1$ steps}; 
\draw[->, draw=blue] (2.3,4.5) -- (2.3,1.5); 
\draw[->, draw=blue] (3,4.5) -- (6.7,3.5); 
\node at (5.5,4.2) {\tiny $p-c_1 > d(n)$};  
\node at (2.4,1) {\small \fbox{+001}}; 
\draw[->, draw=blue] (3.1,1) -- (4.1,1); \node at (3.6,0.7) {\tiny part 3};
\node at (4.9,3) {\small \fbox{+010}}; 
\draw[->, draw=blue] (5.6,1) -- (6.6,1); \node at (6,0.7) {\tiny part 3}; 
\node at (4.9,1) {\small \fbox{+101}}; 
\draw[->, draw=blue] (4.9,2.5) -- (4.9,1.5); \node at (5.4,2) {\tiny part 2}; 
\node at (7.3,1) {\small \fbox{+100}}; 
\draw[->, draw=blue] (7.3,2.5) -- (7.3,1.5); \node at (7.8,2) {\tiny part 2}; 
\node at (7.3,3) {\small \fbox{+110}}; 
\draw[->, draw=blue] (8,1) -- (9,1); \node at (8.5,0.7) {\tiny part 1}; 
\node at (10.3,1) {\small \fbox{matching}}; 
\end{tikzpicture}
\caption{Flow-chart with codes $\pm e_1e_2e_3$, and $\pm$ indicates $\mbox{sign}(G_{\alpha,\beta}^n(0)-G_{\alpha,\beta}^n(1))$.}
\label{fig:graph1}
\end{center}
\end{figure}
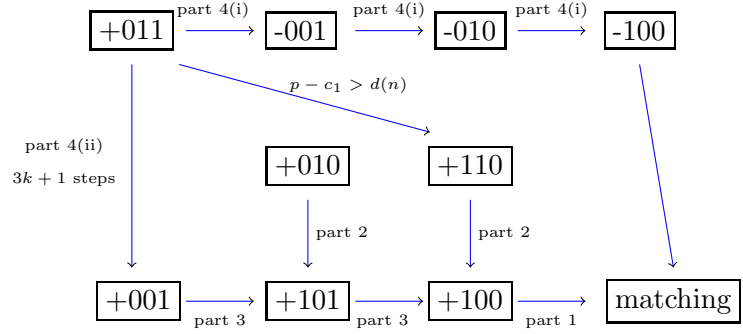 

In other words, $G_{\alpha,\beta}^n(1)$ cannot lie in the region $(\hat p, c_1)$ where $T(x) > p$, 
 and therefore, symbolically, the map $T$ acts as the shift on $e = e_1\dots e_N$.
Hence we have matching within $N$ iterates.
 
This pattern persists if $G_{\alpha,\beta}^n(0) \in U = (p-\eps,p)$ for $\eps>0$ small.  By Theorem~\ref{thm:non_dense}, 
there is matching for Lebesgue-a.e.\ $\alpha \in [0,1]$.
\end{proof}

\begin{figure}[ht]
\begin{center}
\begin{tikzpicture}[scale=0.7]
\draw[-, draw=blue] ( 15.5637019102765 , 5.39460539460539 )
--( 15.4499535256652 , 5.39460539460539 )
--( 15.3362051410539 , 5.39460539460539 )
--( 15.2224567564426 , 5.39460539460539 )
--( 15.1087083718313 , 5.39460539460539 )
--( 14.9949599872200 , 5.39460539460539 )
--( 14.8812116026087 , 5.49450549450549 )
--( 14.7674632179974 , 5.49450549450549 )
--( 14.6537148333861 , 5.49450549450549 )
--( 14.5399664487748 , 5.99400599400599 )
--( 14.4262180641635 , 3.19680319680320 )
--( 14.3124696795522 , 5.19480519480519 )
--( 14.1987212949409 , 5.19480519480519 )
--( 14.0849729103296 , 5.19480519480519 )
--( 13.9712245257183 , 5.19480519480519 )
--( 13.8574761411070 , 2.69730269730270 )
--( 13.7437277564957 , 2.69730269730270 )
--( 13.6299793718844 , 2.69730269730270 )
--( 13.5162309872731 , 2.69730269730270 )
--( 13.4024826026618 , 5.19480519480519 )
--( 13.2887342180505 , 5.19480519480519 )
--( 13.1749858334392 , 5.19480519480519 )
--( 13.0612374488279 , 5.19480519480519 )
--( 12.9474890642166 , 5.19480519480519 )
--( 12.8337406796053 , 5.19480519480519 )
--( 12.7199922949940 , 5.19480519480519 )
--( 12.6062439103827 , 5.19480519480519 )
--( 12.4924955257714 , 1.09890109890110 )
--( 12.3787471411601 , 1.09890109890110 )
--( 12.2649987565488 , 1.09890109890110 )
--( 12.1512503719375 , 1.09890109890110 )
--( 12.0375019873262 , 1.09890109890110 )
--( 11.9237536027148 , 1.09890109890110 )
--( 11.8100052181035 , 1.09890109890110 )
--( 11.6962568334922 , 1.09890109890110 )
--( 11.5825084488809 , 1.09890109890110 )
--( 11.4687600642696 , 1.09890109890110 )
--( 11.3550116796583 , 1.09890109890110 )
--( 11.2412632950470 , 1.09890109890110 )
--( 11.1275149104357 , 1.09890109890110 )
--( 11.0137665258244 , 1.09890109890110 )
--( 10.9000181412131 , 1.09890109890110 )
--( 10.7862697566018 , 1.09890109890110 )
--( 10.6725213719905 , 1.09890109890110 )
--( 10.5587729873792 , 1.09890109890110 )
--( 10.4450246027679 , 1.09890109890110 )
--( 10.3312762181566 , 1.09890109890110 )
--( 10.2175278335453 , 1.09890109890110 )
--( 10.1037794489340 , 1.09890109890110 )
--( 9.99003106432273 , 1.09890109890110 )
--( 9.87628267971142 , 1.09890109890110 )
--( 9.76253429510012 , 1.09890109890110 )
--( 9.64878591048882 , 1.09890109890110 )
--( 9.53503752587752 , 1.09890109890110 )
--( 9.42128914126622 , 1.09890109890110 )
--( 9.30754075665492 , 1.09890109890110 )
--( 9.19379237204362 , 1.09890109890110 )
--( 9.08004398743232 , 1.09890109890110 )
--( 8.96629560282102 , 1.09890109890110 )
--( 8.85254721820971 , 1.09890109890110 )
--( 8.73879883359841 , 1.09890109890110 )
--( 8.62505044898711 , 1.09890109890110 )
--( 8.51130206437581 , 1.09890109890110 )
--( 8.39755367976451 , 1.09890109890110 )
--( 8.28380529515321 , 1.09890109890110 )
--( 8.17005691054191 , 1.09890109890110 )
--( 8.05630852593060 , 0.699300699300699 )
--( 7.94256014131930 , 0.699300699300699 )
--( 7.82881175670800 , 0.699300699300699 )
--( 7.71506337209670 , 0.699300699300699 )
--( 7.60131498748540 , 0.699300699300699 )
--( 7.48756660287410 , 0.399600399600400 )
--( 7.37381821826280 , 0.399600399600400 )
--( 7.26006983365150 , 0.399600399600400 )
--( 7.14632144904019 , 0.399600399600400 )
--( 7.03257306442889 , 0.399600399600400 )
--( 6.91882467981759 , 0.399600399600400 )
--( 6.80507629520629 , 0.399600399600400 )
--( 6.69132791059499 , 0.399600399600400 )
--( 6.57757952598369 , 0.399600399600400 )
--( 6.46383114137239 , 0.399600399600400 )
--( 6.35008275676109 , 0.399600399600400 )
--( 6.23633437214978 , 0.399600399600400 )
--( 6.12258598753848 , 0.399600399600400 )
--( 6.00883760292718 , 0.399600399600400 )
--( 5.89508921831588 , 0.399600399600400 )
--( 5.78134083370458 , 0.399600399600400 )
--( 5.66759244909328 , 0.399600399600400 )
--( 5.55384406448198 , 0.399600399600400 )
--( 5.44009567987067 , 0.399600399600400 )
--( 5.32634729525937 , 0.399600399600400 )
--( 5.21259891064807 , 0.399600399600400 )
--( 5.09885052603677 , 0.399600399600400 )
--( 4.98510214142547 , 0.399600399600400 )
--( 4.87135375681417 , 0.399600399600400 )
--( 4.75760537220287 , 0.399600399600400 )
--( 4.64385698759156 , 0.399600399600400 )
--( 4.53010860298027 , 0.399600399600400 )
--( 4.41636021836896 , 0.399600399600400 )
--( 4.30261183375766 , 0.399600399600400 );

\draw[-, draw=green] ( 15.5637019102765 , 9.89010989010989 )
--( 15.4499535256652 , 5.99400599400599 )
--( 15.3362051410539 , 5.89410589410589 )
--( 15.2224567564426 , 6.79320679320679 )
--( 15.1087083718313 , 6.49350649350649 )
--( 14.9949599872200 , 5.99400599400599 )
--( 14.8812116026087 , 3.79620379620380 )
--( 14.7674632179974 , 3.19680319680320 )
--( 14.6537148333861 , 3.19680319680320 )
--( 14.5399664487748 , 3.19680319680320 )
--( 14.4262180641635 , 2.09790209790210 )
--( 14.3124696795522 , 2.09790209790210 )
--( 14.1987212949409 , 2.09790209790210 )
--( 14.0849729103296 , 2.09790209790210 )
--( 13.9712245257183 , 5.19480519480519 )
--( 13.8574761411070 , 2.69730269730270 )
--( 13.7437277564957 , 2.69730269730270 )
--( 13.6299793718844 , 2.69730269730270 )
--( 13.5162309872731 , 2.69730269730270 )
--( 13.4024826026618 , 5.19480519480519 )
--( 13.2887342180505 , 5.19480519480519 )
--( 13.1749858334392 , 5.19480519480519 )
--( 13.0612374488279 , 5.29470529470529 )
--( 12.9474890642166 , 5.19480519480519 )
--( 12.8337406796053 , 5.19480519480519 )
--( 12.7199922949940 , 5.19480519480519 )
--( 12.6062439103827 , 5.29470529470529 )
--( 12.4924955257714 , 5.19480519480519 )
--( 12.3787471411601 , 5.19480519480519 )
--( 12.2649987565488 , 5.19480519480519 )
--( 12.1512503719375 , 5.29470529470529 )
--( 12.0375019873262 , 5.29470529470529 )
--( 11.9237536027148 , 5.29470529470529 )
--( 11.8100052181035 , 5.19480519480519 )
--( 11.6962568334922 , 5.19480519480519 )
--( 11.5825084488809 , 5.99400599400599 )
--( 11.4687600642696 , 5.89410589410589 )
--( 11.3550116796583 , 6.79320679320679 )
--( 11.2412632950470 , 5.29470529470529 )
--( 11.1275149104357 , 0.499500499500500 )
--( 11.0137665258244 , 0.499500499500500 )
--( 10.9000181412131 , 0.499500499500500 )
--( 10.7862697566018 , 0.499500499500500 )
--( 10.6725213719905 , 0.499500499500500 )
--( 10.5587729873792 , 0.499500499500500 )
--( 10.4450246027679 , 0.499500499500500 )
--( 10.3312762181566 , 0.499500499500500 )
--( 10.2175278335453 , 0.499500499500500 )
--( 10.1037794489340 , 0.499500499500500 )
--( 9.99003106432273 , 0.499500499500500 )
--( 9.87628267971142 , 0.499500499500500 )
--( 9.76253429510012 , 0.499500499500500 )
--( 9.64878591048882 , 0.499500499500500 )
--( 9.53503752587752 , 0.499500499500500 )
--( 9.42128914126622 , 0.499500499500500 )
--( 9.30754075665492 , 0.499500499500500 )
--( 9.19379237204362 , 0.499500499500500 )
--( 9.08004398743232 , 0.499500499500500 )
--( 8.96629560282102 , 0.499500499500500 )
--( 8.85254721820971 , 0.499500499500500 )
--( 8.73879883359841 , 0.499500499500500 )
--( 8.62505044898711 , 0.499500499500500 )
--( 8.51130206437581 , 0.499500499500500 )
--( 8.39755367976451 , 0.499500499500500 )
--( 8.28380529515321 , 0.499500499500500 )
--( 8.17005691054191 , 0.499500499500500 )
--( 8.05630852593060 , 0.499500499500500 )
--( 7.94256014131930 , 0.499500499500500 )
--( 7.82881175670800 , 0.499500499500500 )
--( 7.71506337209670 , 0.499500499500500 )
--( 7.60131498748540 , 0.499500499500500 )
--( 7.48756660287410 , 0.499500499500500 )
--( 7.37381821826280 , 0.499500499500500 )
--( 7.26006983365150 , 0.499500499500500 )
--( 7.14632144904019 , 0.499500499500500 )
--( 7.03257306442889 , 0.499500499500500 )
--( 6.91882467981759 , 0.499500499500500 )
--( 6.80507629520629 , 0.499500499500500 )
--( 6.69132791059499 , 0.499500499500500 )
--( 6.57757952598369 , 0.499500499500500 )
--( 6.46383114137239 , 0.499500499500500 )
--( 6.35008275676109 , 0.499500499500500 )
--( 6.23633437214978 , 0.499500499500500 )
--( 6.12258598753848 , 0.499500499500500 )
--( 6.00883760292718 , 0.499500499500500 )
--( 5.89508921831588 , 0.499500499500500 )
--( 5.78134083370458 , 0.499500499500500 )
--( 5.66759244909328 , 0.499500499500500 )
--( 5.55384406448198 , 0.499500499500500 )
--( 5.44009567987067 , 0.499500499500500 )
--( 5.32634729525937 , 0.499500499500500 )
--( 5.21259891064807 , 0.499500499500500 )
--( 5.09885052603677 , 0.499500499500500 )
--( 4.98510214142547 , 0.499500499500500 )
--( 4.87135375681417 , 0.499500499500500 )
--( 4.75760537220287 , 0.499500499500500 )
--( 4.64385698759156 , 0.499500499500500 )
--( 4.53010860298027 , 0.499500499500500 )
--( 4.41636021836896 , 0.499500499500500 )
--( 4.30261183375766 , 0.499500499500500 );
\draw[-, draw=red] ( 15.5637019102765 , 0.599400599400599 )
--( 15.4499535256652 , 0.599400599400599 )
--( 15.3362051410539 , 0.599400599400599 )
--( 15.2224567564426 , 0.599400599400599 )
--( 15.1087083718313 , 0.599400599400599 )
--( 14.9949599872200 , 0.599400599400599 )
--( 14.8812116026087 , 0.599400599400599 )
--( 14.7674632179974 , 0.599400599400599 )
--( 14.6537148333861 , 0.599400599400599 )
--( 14.5399664487748 , 0.599400599400599 )
--( 14.4262180641635 , 0.599400599400599 )
--( 14.3124696795522 , 0.599400599400599 )
--( 14.1987212949409 , 0.599400599400599 )
--( 14.0849729103296 , 0.599400599400599 )
--( 13.9712245257183 , 0.599400599400599 )
--( 13.8574761411070 , 0.599400599400599 )
--( 13.7437277564957 , 0.599400599400599 )
--( 13.6299793718844 , 0.599400599400599 )
--( 13.5162309872731 , 0.599400599400599 )
--( 13.4024826026618 , 0.599400599400599 )
--( 13.2887342180505 , 0.599400599400599 )
--( 13.1749858334392 , 0.599400599400599 )
--( 13.0612374488279 , 0.599400599400599 )
--( 12.9474890642166 , 0.599400599400599 )
--( 12.8337406796053 , 0.599400599400599 )
--( 12.7199922949940 , 5.79420579420579 )
--( 12.6062439103827 , 9.79020979020979 )
--( 12.4924955257714 , 6.59340659340659 )
--( 12.3787471411601 , 6.49350649350649 )
--( 12.2649987565488 , 6.19380619380619 )
--( 12.1512503719375 , 5.79420579420579 )
--( 12.0375019873262 , 5.69430569430569 )
--( 11.9237536027148 , 6.79320679320679 )
--( 11.8100052181035 , 6.89310689310689 )
--( 11.6962568334922 , 5.99400599400599 )
--( 11.5825084488809 , 5.19480519480519 )
--( 11.4687600642696 , 5.19480519480519 )
--( 11.3550116796583 , 6.49350649350649 )
--( 11.2412632950470 , 7.19280719280719 )
--( 11.1275149104357 , 5.19480519480519 )
--( 11.0137665258244 , 5.19480519480519 )
--( 10.9000181412131 , 5.19480519480519 )
--( 10.7862697566018 , 4.79520479520480 )
--( 10.6725213719905 , 4.79520479520480 )
--( 10.5587729873792 , 4.79520479520480 )
--( 10.4450246027679 , 1.69830169830170 )
--( 10.3312762181566 , 1.69830169830170 )
--( 10.2175278335453 , 1.69830169830170 )
--( 10.1037794489340 , 1.69830169830170 )
--( 9.99003106432273 , 1.69830169830170 )
--( 9.87628267971142 , 1.69830169830170 )
--( 9.76253429510012 , 1.69830169830170 )
--( 9.64878591048882 , 1.69830169830170 )
--( 9.53503752587752 , 1.69830169830170 )
--( 9.42128914126622 , 1.69830169830170 )
--( 9.30754075665492 , 1.69830169830170 )
--( 9.19379237204362 , 1.19880119880120 )
--( 9.08004398743232 , 1.19880119880120 )
--( 8.96629560282102 , 1.19880119880120 )
--( 8.85254721820971 , 1.19880119880120 )
--( 8.73879883359841 , 1.19880119880120 )
--( 8.62505044898711 , 1.19880119880120 )
--( 8.51130206437581 , 1.19880119880120 )
--( 8.39755367976451 , 1.19880119880120 )
--( 8.28380529515321 , 1.19880119880120 )
--( 8.17005691054191 , 1.19880119880120 )
--( 8.05630852593060 , 1.19880119880120 )
--( 7.94256014131930 , 1.19880119880120 )
--( 7.82881175670800 , 1.19880119880120 )
--( 7.71506337209670 , 1.19880119880120 )
--( 7.60131498748540 , 1.19880119880120 )
--( 7.48756660287410 , 1.19880119880120 )
--( 7.37381821826280 , 1.19880119880120 )
--( 7.26006983365150 , 1.19880119880120 )
--( 7.14632144904019 , 1.19880119880120 )
--( 7.03257306442889 , 1.19880119880120 )
--( 6.91882467981759 , 1.19880119880120 )
--( 6.80507629520629 , 1.19880119880120 )
--( 6.69132791059499 , 1.19880119880120 )
--( 6.57757952598369 , 1.19880119880120 )
--( 6.46383114137239 , 1.19880119880120 )
--( 6.35008275676109 , 1.19880119880120 )
--( 6.23633437214978 , 1.19880119880120 )
--( 6.12258598753848 , 1.19880119880120 )
--( 6.00883760292718 , 1.19880119880120 )
--( 5.89508921831588 , 1.19880119880120 )
--( 5.78134083370458 , 1.19880119880120 )
--( 5.66759244909328 , 1.19880119880120 )
--( 5.55384406448198 , 1.19880119880120 )
--( 5.44009567987067 , 1.19880119880120 )
--( 5.32634729525937 , 1.19880119880120 )
--( 5.21259891064807 , 1.19880119880120 )
--( 5.09885052603677 , 1.19880119880120 )
--( 4.98510214142547 , 0.699300699300699 )
--( 4.87135375681417 , 0.699300699300699 )
--( 4.75760537220287 , 0.699300699300699 )
--( 4.64385698759156 , 0.699300699300699 )
--( 4.53010860298027 , 0.699300699300699 )
--( 4.41636021836896 , 0.699300699300699 )
--( 4.30261183375766 , 0.699300699300699 );
\draw[-, draw=black] ( 15.5637019102765 , 0.499500499500500 )
--( 15.4499535256652 , 5.49450549450549 )
--( 15.3362051410539 , 6.09390609390609 )
--( 15.2224567564426 , 1.59840159840160 )
--( 15.1087083718313 , 4.89510489510489 )
--( 14.9949599872200 , 4.89510489510489 )
--( 14.8812116026087 , 5.09490509490510 )
--( 14.7674632179974 , 6.59340659340659 )
--( 14.6537148333861 , 2.69730269730270 )
--( 14.5399664487748 , 0.899100899100899 )
--( 14.4262180641635 , 2.39760239760240 )
--( 14.3124696795522 , 4.99500499500500 )
--( 14.1987212949409 , 4.79520479520480 )
--( 14.0849729103296 , 1.79820179820180 )
--( 13.9712245257183 , 1.79820179820180 )
--( 13.8574761411070 , 1.39860139860140 )
--( 13.7437277564957 , 3.29670329670330 )
--( 13.6299793718844 , 0.799200799200799 )
--( 13.5162309872731 , 3.69630369630370 )
--( 13.4024826026618 , 2.59740259740260 )
--( 13.2887342180505 , 6.29370629370629 )
--( 13.1749858334392 , 0.899100899100899 )
--( 13.0612374488279 , 2.29770229770230 )
--( 12.9474890642166 , 0.899100899100899 )
--( 12.8337406796053 , 0.899100899100899 )
--( 12.7199922949940 , 1.29870129870130 )
--( 12.6062439103827 , 1.79820179820180 )
--( 12.4924955257714 , 3.49650349650350 )
--( 12.3787471411601 , 0.499500499500500 )
--( 12.2649987565488 , 0.499500499500500 )
--( 12.1512503719375 , 0.499500499500500 )
--( 12.0375019873262 , 0.499500499500500 )
--( 11.9237536027148 , 0.499500499500500 )
--( 11.8100052181035 , 0.499500499500500 )
--( 11.6962568334922 , 0.499500499500500 )
--( 11.5825084488809 , 0.999000999000999 )
--( 11.4687600642696 , 1.29870129870130 )
--( 11.3550116796583 , 2.89710289710290 )
--( 11.2412632950470 , 1.89810189810190 )
--( 11.1275149104357 , 1.19880119880120 )
--( 11.0137665258244 , 0.799200799200799 )
--( 10.9000181412131 , 1.09890109890110 )
--( 10.7862697566018 , 4.89510489510489 )
--( 10.6725213719905 , 0.899100899100899 )
--( 10.5587729873792 , 1.29870129870130 )
--( 10.4450246027679 , 0.999000999000999 )
--( 10.3312762181566 , 1.49850149850150 )
--( 10.2175278335453 , 1.79820179820180 )
--( 10.1037794489340 , 0.999000999000999 )
--( 9.99003106432273 , 0.799200799200799 )
--( 9.87628267971142 , 6.29370629370629 )
--( 9.76253429510012 , 1.19880119880120 )
--( 9.64878591048882 , 1.39860139860140 )
--( 9.53503752587752 , 1.19880119880120 )
--( 9.42128914126622 , 0.899100899100899 )
--( 9.30754075665492 , 1.09890109890110 )
--( 9.19379237204362 , 0.699300699300699 )
--( 9.08004398743232 , 1.39860139860140 )
--( 8.96629560282102 , 1.29870129870130 )
--( 8.85254721820971 , 1.49850149850150 )
--( 8.73879883359841 , 0.899100899100899 )
--( 8.62505044898711 , 6.79320679320679 )
--( 8.51130206437581 , 0.499500499500500 )
--( 8.39755367976451 , 0.499500499500500 )
--( 8.28380529515321 , 0.499500499500500 )
--( 8.17005691054191 , 0.499500499500500 )
--( 8.05630852593060 , 0.499500499500500 )
--( 7.94256014131930 , 0.499500499500500 )
--( 7.82881175670800 , 0.499500499500500 )
--( 7.71506337209670 , 0.499500499500500 )
--( 7.60131498748540 , 0.499500499500500 )
--( 7.48756660287410 , 6.89310689310689 )
--( 7.37381821826280 , 1.59840159840160 )
--( 7.26006983365150 , 0.999000999000999 )
--( 7.14632144904019 , 1.19880119880120 )
--( 7.03257306442889 , 0.899100899100899 )
--( 6.91882467981759 , 1.49850149850150 )
--( 6.80507629520629 , 3.29670329670330 )
--( 6.69132791059499 , 2.79720279720280 )
--( 6.57757952598369 , 1.39860139860140 )
--( 6.46383114137239 , 0.899100899100899 )
--( 6.35008275676109 , 0.499500499500500 )
--( 6.23633437214978 , 0.499500499500500 )
--( 6.12258598753848 , 0.499500499500500 )
--( 6.00883760292718 , 0.499500499500500 )
--( 5.89508921831588 , 0.499500499500500 )
--( 5.78134083370458 , 0.499500499500500 )
--( 5.66759244909328 , 0.999000999000999 )
--( 5.55384406448198 , 1.19880119880120 )
--( 5.44009567987067 , 1.29870129870130 )
--( 5.32634729525937 , 1.49850149850150 )
--( 5.21259891064807 , 0.699300699300699 )
--( 5.09885052603677 , 1.39860139860140 )
--( 4.98510214142547 , 0.899100899100899 )
--( 4.87135375681417 , 3.39660339660340 )
--( 4.75760537220287 , 1.39860139860140 )
--( 4.64385698759156 , 2.29770229770230 )
--( 4.53010860298027 , 0.999000999000999 )
--( 4.41636021836896 , 3.79620379620380 )
--( 4.30261183375766 , 0.499500499500500 );
\draw[->] (3,0) -- (16,0); \node at (16,-0.3) {$\alpha$};
\draw[-] (4.5,-0.1) -- (4.5,0.1); \node at (4.5,-0.4) {\small $0.3$};
\draw[-] (9.75,-0.1) -- (9.75,0.1); \node at (9.75,-0.4) {\small $0.4$};
\draw[-] (15,-0.1) -- (15,0.1); \node at (15,-0.4) {\small $0.5$};
\draw[->] (3,0) -- (3,10); \node at (2.7,10) {$n$};
\draw[-] (2.9,7.5) -- (3.1,7.5); \node at (2.6,7.5) {\small $75$};
\draw[-] (2.9,5) -- (3.1,5); \node at (2.6,5) {\small $50$};
\draw[-] (2.9,2.5) -- (3.1,2.5); \node at (2.6,2.5) {\small $25$};
\node[color=blue] at (7,0.6) {\tiny $0110$};
\node[color=green] at (9,0.3) {\tiny $0101$};
\node[color=red] at (6,1.4) {\tiny $0111$};
\node[color=black] at (6.5,5.4) {\tiny from start};
\node[color=black] at (6.45,5) {\tiny $T(0)$, $T(1)$};
\end{tikzpicture}
\caption{Number of iterates before matching for the tetrabonacci number.}
\label{fig:graph2}
\end{center}
\end{figure}
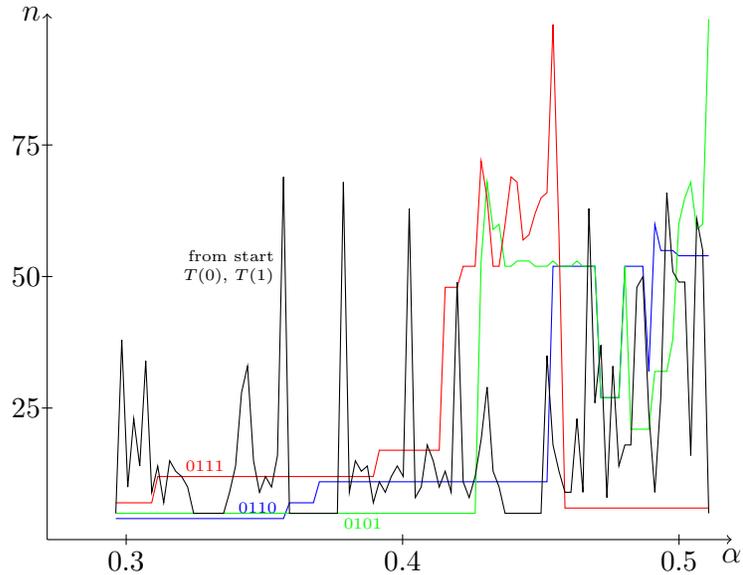 

For the tetrabonacci number (i.e., $N=4$) a similar proof seems possible, but the number of case distinctions 
becomes very large. Instead, in Figure~\ref{fig:graph2}, we give some numerics on
the number of iterates needed before matching occurs.
The black curve has starting point $(T(0), T(1))$, and the other curves are in the gist of the proof of 
Proposition~\ref{prop:multinacci}, namely they start when $G_{\alpha,\beta}^n(0$ is close to the fixed point:
 $G_{\alpha,\beta}^n(0) = p -\eps$ for $\eps = 0.01$ and $G_{\alpha,\beta}^(1) = G_{\alpha,\beta}^n(y) - d(N)$
for $d(n) = \sum_{i=1}^4 e_i(n) \beta^{-i}$ with $e = 0110, \ 0101$ and $0111$.
The range $\alpha \in [\beta^{-3}, \beta^{-1}]$ with $100$ grid-points in the horizontal direction.

\end{document}